\RequirePackage{ifpdf}
\ifpdf 
\documentclass[pdftex]{sigma}
\else
\documentclass{sigma}
\fi


\newcommand{\grad}{\nabla}

\newcommand{\mm}{\mathcal{M}}
\newcommand{\m}{\mathbf{M}}
\newcommand{\nn}{\mathcal{N}}

\newcommand{\dd}{\mathcal{D}}
\newcommand{\ff}{\mathcal{F}}

\newcommand{\hh}{\mathcal{H}}

\newcommand{\vv}{\mathcal{V}}

\newcommand{\h}{\mathfrak{h}}
\newcommand{\g}{\mathfrak{g}}
\newcommand{\dm}{\dd^1(\m)}
\newcommand{\vm}{\vv(\m)}
\newcommand{\fm}{\ff(\m)}

\begin{document}


\renewcommand{\PaperNumber}{109}

\FirstPageHeading

\ShortArticleName{Quasi-Exactly Solvable Schr\"odinger Operators in Three Dimensions}

\ArticleName{Quasi-Exactly Solvable Schr\"odinger Operators\\ in Three Dimensions}

\Author{M\'elisande FORTIN BOISVERT}

\AuthorNameForHeading{M. Fortin Boisvert}

\Address{Department of Mathematics and Statistics, McGill University, Montr\'eal, Canada, H3A~2K6}

\Email{\href{mailto:boisvert@math.mcgill.ca}{boisvert@math.mcgill.ca}}

\URLaddress{\url{http://www.math.mcgill.ca/boisvert/}}

\ArticleDates{Received October 01, 2007, in f\/inal form November 02,
2007; Published online November 21, 2007}

\Abstract{The main contribution of our paper is to give a partial
classif\/ication of the quasi-exactly solvable Lie algebras of f\/irst order dif\/ferential
operators in three variables, and to show how this can be applied to
the construction of new quasi-exactly solvable Schr\"odinger operators in three
dimensions.}

\Keywords{quasi-exact solvability; Schr\"odinger operators; Lie
algebras of f\/irst order dif\/fe\-ren\-tial operators; three dimensional
manifolds}

\Classification{81Q70; 22E70; 53C80}

\section{Introduction}

  Recall that a Schr\"odinger operator on a $n$-dimensional
 Riemannian manifold $(\mathbf{M},g)$ is a second order linear
 dif\/ferential operator  of the form
\[ \hh_0=-\frac{1}{2}\Delta+U,\] where $\Delta$ is the
Laplace--Beltrami operator and $U$ is the potential function for the
physical system under consideration. A question of fundamental
interest in quantum mechanics is to construct eigenfunctions $\psi$
of the Schr\"odinger operator $\hh_0$. One approach to this problem,
which is based on the representation theory of Lie algebras, is to
consider Schr\"odinger operators $\hh_0$ which are quasi-exactly solvable, in a
sense
that will be def\/ined below.

We begin by considering the case of a general linear second-order
dif\/ferential operator $\hh$, given in local coordinates by \[
\hh=\sum_{i,j=1}^nA^{ij}\partial_i
\partial_j+\sum_{i=1}^n B^i\partial_i+C. \]
The operator $\hh$ is said to be \emph{Lie algebraic} if it is an
element of the universal enveloping algebra of $\g$, a f\/inite
dimensional Lie algebra of f\/irst order dif\/ferential operators. More
explicitly,
\begin{equation}\label{liealge} \hh=\sum_{a,b=1}^mC_{ab}T^a T^b+\sum_{a=1}^m
C_aT^a+C_0,\end{equation} where
\begin{equation}\label{gen} T^a=v^a+\eta^a, \qquad  1\leq a \leq m,\end{equation}
is a basis or $\g$. In (\ref{gen}), the operators $v^1,\dots,v^m$ are
vector f\/ields, and $\eta_1,\dots,\eta_m$ are multiplication operators.
A Lie algebra $\g$ of f\/irst order dif\/ferential operators is said to
be {\it quasi-exactly solvable} if one can f\/ind explicitly a f\/inite
dimensional $\g$-module $\nn$ of smooth functions, i.e., if
$\nn=\{ h^1,\dots,h^r\}$ with $T^a(\nn)\subset\nn$ for all $1 \leq a
\leq m$. A Lie algebraic operator $\hh$, is said to be {\it
quasi-exactly solvable} if it lies in the universal enveloping
algebra of a quasi-exactly solvable Lie algebra of f\/irst order
dif\/ferential operators. Obviously, $\hh(\nn)\subset\nn$, i.e.\
the module $\nn$ will
 be f\/ixed by the operator $\hh$. Moreover, if the functions contained in the module $\nn$ are
square integrable with respect to the Riemannian measure
$\sqrt{g}dx^1\cdots dx^n$, where $g$ is the determinant of the covariant
metric, the operator $\hh$ is said to be a
{\it normalizable} quasi-exactly solvable operator.

We can see from the above def\/initions that the formal eigenvalue
problem for quasi-exactly solvable Schr\"odinger operators can be solved partially by
elementary linear algebraic methods. Indeed, the operator $\hh$ is
self-adjoint with respect to the inner product associated to the
standard measure, therefore the restriction of $\hh$ to the f\/inite
dimensional module $\nn$ is a Hermitian f\/inite dimensional linear
operator. Thus, one can in principle, compute $r=\dim (\nn)$
eigenva\-lues of $\hh$, counting multiplicities, by diagonalizing the
$r\times r$ matrix representing $\hh$ in a basis
of~$\nn$.

It seems that the concept of a ``spectrum generating algebra" was
f\/irst introduced by Goshen and Lipkin  in \cite{GL} in 1959.
However, this paper did not seem to have been be noticed by the
community and, ten years later, spectrum generating algebras were
independently rediscovered by two groups of physicists, see
\cite{BB} and \cite{DGN}. Their work was an impetus for further
research in this area as one can see by browsing in the two volume
set of reprints \cite{BNB} and the conference proceedings \cite{GO}.
A survey of the history and the contribution papers related to the
spectrum generating algebras is given in the review paper of B\"ohm
and Ne'eman, which appears at the beginning of \cite{BN}. In the
early $1980$'s, Iachello, Levine, Alhassid, G\"ursey and
collaborators exhibited applications of spectrum generating algebras
to molecular spectroscopy; a survey of the theory and applications
is given in the book \cite{IL}. In these applications, both nuclear
and spectroscopic, the relevant Hamiltonian is a Lie algebraic
operator in the sense described previously. Finally, the analysis of
a new class of Schr\"odinger operators, the \emph{quasi-exactly solvable} class, was
initiated in late $1980$'s by Shifman, Turbiner and Ushveridze, see
\cite{ST, T,U}. A survey of the theory and
applications of quasi-exactly solvable systems in physics is given in \cite{U2}.

There exists a complete classif\/ication of quasi-exactly solvable Schr\"odinger operators
in one dimension. In two dimensions, this classif\/ication is in
principle complete. Indeed, all the Lie algebraic linear
dif\/ferential operators for which the formal spectral problem is
solvable are known.  The question of determining if the operator is
equivalent to a Schr\"odinger operator
 will be discussed below. The main contribution of our paper is to
 extend these results to three dimensions by giving a partial
classif\/ication of the quasi-exactly solvable Lie algebras of f\/irst order dif\/ferential
operator in three variables, and showing how this can be applied to
the construction of new quasi-exactly solvable Schr\"odinger operators in three
dimensions. Our work is based on the classif\/ication of f\/inite
dimensional Lie algebras of vector f\/ields in three dimensions begun
by Lie in \cite{Lie} and completed by Amaldi in \cite{Ama}.

Let $g^{(ij)}$ be the contravariant metric of the manifold
$\mathbf{M}$ in a local coordinate chart, $\widetilde{g}$ its
determinant and $g$ the determinant of the covariant metric. In that
setting, a Schr\"odinger operator reads locally
as\[\hh_0=-\frac{1}{2}\sum_{i,j=1}^n
\left[g^{ij}\partial_{ij}+\partial_i(g^{ij})\partial_j
-\frac{g^{ij}\partial_i(\widetilde{g})}{2\widetilde{g}}\partial_j\right]+U.
\]
Recall that a quasi-exactly solvable second order operator is not, in general, a
Schr\"odinger operator. However this operator might be equivalent to a
Schr\"odinger operator in a way that preserves the formal spectral
properties of the operators under consideration. The appropriate
notion of equivalence, which will be used throughout our work, is
the following. Two dif\/ferential operators are \emph{locally
equivalent} if there is a gauge transformation $\hh\rightarrow~\mu
\hh \mu^{-1}$, with gauge factor $\mu=e^\lambda$, and a change of
variables relating one to the other. In principle, it is possible to
verify if a general second order dif\/ferential operator $\hh$ is
equivalent to a Schr\"odinger operator with respect to this notion of
equivalence. Indeed, every
 second order linear dif\/ferential operator can be given locally by
\[ \hh=-\frac{1}{2}\sum_{i,j=1}^n g^{ij}\partial_{ij}+\sum_i^n
h^i\partial_i+U.\] If the contravariant tensor $g^{(ij)}$ is
non-degenerate, that is if $g$ does not vanish, the operator can be
expressed as\begin{equation}\label{lap+vec+pot}
\hh=-\frac{1}{2}\Delta+\vec{V}+U,\end{equation} where $\vec{V}=b^i
\partial_i$ is a vector f\/ield. For this operator to be locally equivalent to a
Schr\"odinger operator, the vector f\/ield $\vec{V}$ has to be a
gradient vector f\/ield with respect to the metric $g^{(ij)}$.
Locally, this will be the case if and only if
$\omega=g_{ij}b^jdx^i$, the one form associated to $\vec{V}$, is
closed. For this reason, this condition is named the \emph{closure
condition}. Note that if $\vec{V}=\grad(\lambda)$, the gauge factor
is given by
$e^\frac{\lambda}{2}$.

Given an operator of the form (\ref{liealge}), the closure
conditions can be easily verif\/ied provided the contravariant metric
$g^{(ij)}$ is non-degenerate. Indeed, these conditions can be
written as algebraic constraints on the coef\/f\/icients $C_{ab}$ and
$C_c$, and are the Frobenius compatibility conditions for an
overdetermined system that
will be described later.

An important point to keep in mind is that the class of
quasi-exactly solvable operators is invariant under local
equivalence. Indeed, suppose $\hh$ is a quasi-exactly solvable
operator which is gauge equivalent to an other operator $\hh_0$
under the rescaling $\mu$. If $\hh$ lies in the universal enveloping
algebra of $\g$, whose $\g$-module is $\nn$, one can easily show
that $\hh_0$ is quasi-exactly solvable with respect to the f\/inite
dimensional Lie algebra
\[ \widetilde{\g}=\mu\cdot \g \cdot\mu^{-1}=\big\{  \mu \cdot T \cdot \mu^{-1}\ | \ T\in \g  \big\}
\]
which is isomorphic to $\g$ and posses the f\/inite-dimensional
$\widetilde{\g}$-module
\[\widetilde{\nn}=\mu\cdot \nn=\{  \mu \cdot  h \ | \ h\in \nn   \}.
\]
Note however that the gauge factor is not necessarily unitary. Thus,
a gauge transformation does not necessarily preserve the
normalizability property of the functions in $\nn$. Therefore, the
class of normalizable quasi-exactly solvable operators is not
invariant under our local equivalence. We now give an example of a
normalizable quasi-exactly solvable Schr\"odinger
operator in three variables.

\begin{example}\label{ex}
In this example we
 consider the quasi-exactly solvable Lie
algebra $\g\cong\mathfrak{sl(2)}\times \mathfrak{sl(2)}\times
\mathfrak{sl(2)}$. With the standard notation
$p=\frac{\partial}{\partial x}$, $q=\frac{\partial}{\partial y}$ and
$r=\frac{\partial}{\partial z}$, this Lie algebra representation can
be spanned by the following f\/irst order dif\/ferential operators
\begin{gather*}
     T^1=p, \qquad  T^2=xp,  \qquad  T^3=x^2p-x,  \qquad  T^4=q,  \qquad  T^5=yq,   \\
 T^6=y^2q-y,  \qquad  T^7=r,   \qquad  T^8=zr,  \qquad
     T^9=z^2r-z.
   \end{gather*}
Then, the f\/inite dimensional module of smooth functions
\[ \nn_{m_xm_ym_z}:=\{x^iy^jz^k \,| \ 0\leq i \leq m_x,\  0\leq j  \leq m_y ,\  0\leq k \leq m_z  \}\]
 is a $\g$-module provided $m_x=m_y=m_z=1$. With the following choice of
coef\/f\/icients, one constructs the quasi-exactly solvable operator
\begin{gather*} -2\hh
=(T^1)^2+(T^2)^2+2[(T^3)^2+(T^4)^2+2(T^5)^2+2(T^6)^2]+(T^7)^2+(T^8)^2\\
\phantom{-2\hh=}{} +2(T^9)^2+\{T^1,T^3  \}+\{T^7,T^9\} -2T^4-2T^5-4T^6-8,
\end{gather*}
where $\{T^a,T^b  \}=T^a(T^b)+T^b(T^a)$. The induced contravariant
metric associated to this operator is computed to be the following
positive def\/inite matrix
\begin{equation}\label{metric}g^{(ij)}=\left(
  \begin{array}{ccc}
    (x^2+1)^2 & 0 & (x^2+1)(z^2+1) \\
    0 & y^4+4 y^2 +1 & 0 \\
    (x^2+1)(z^2+1) & 0 & 2(z^2+1)^2 \\
  \end{array}
\right),\end{equation}
 whose determinant is
$g=(x^2+1)^2(y^4+4y^2+1)(z^2+1)^2$. Then, with respect to this
non-degenerate metric, the operator $\hh$ can also be described as
\[-2 \hh=\Delta+\vec{V}+U,\]where $\vec{V}=-2 (x^3+ x+z+ z x^2)p-2(2 y+ y^3)q-2(2z^3+2z+ x+ x z^2)r.$
It is not hard to verify, always with respect to the metric
(\ref{metric}), that  the f\/irst order term $\vec{V}$ is the gradient
of the function $\lambda=-\ln(x^2+1)-1/2\ln(y^4+4y^2+1)-\ln(z^2+1)$.
Hence, by considering the gauge factor
\[\mu=e^{\frac{\lambda}{2}}=(x^2+1)^{-1/2}(y^4+4y^2+1)^{-1/4}
(z^2+1)^{-1/2},\] the operator $\hh$ is gauge equivalent to a
Schr\"odinger operator
\[-2 \hh_0=\Delta+U,\] were the potential is
a rational function of $y$.
Furthermore, it is not hard to show that, after the gauge
transformation, the functions in $\widetilde{\nn}_{111}=\{\ \mu
\cdot x^iy^jz^k\ | \ 0\leq i,j,k \leq 1
 \}$ are square integrable with respect to $\sqrt{g}dxdydz$. Recall here that
 $g$ is the determinant of the covariant metric, hence
 $\sqrt{g}=\mu^2$. Thus, for $i$, $j$, $k$ either $0$ or $1$, one
 can use Fubini's theorem to decompose the integral
\begin{gather*} \iiint_{\mathbb{R}^3}(\mu x^{i}y^{j}z^{k})^2\mu^2
dxdydz =\iiint_{\mathbb{R}^3}\frac{x^{2i}y^{2j}z^{2k}}
{(x^2+1)^{2}(y^4+4y^2+1) (z^2+1)^{2}} dxdydz,
\end{gather*} into
the product of three f\/inite integrals in one variable. Consequently
the operator $\hh_0$ is a~normalizable quasi-exactly solvable Schr\"odinger operator and
it is possible to compute eight eigenfunctions by diagonalizing the
matrix obtained by restricting $\hh$ to $\nn$. For this operator,
one gets two eigenvalues, $-3$ and $1$, both of multiplicity four.
The eight eigenfunctions associated to these two eigenvalues are
respectively,
\begin{gather*}
 \psi_{-3,1}= -1 + x z, \qquad  \psi_{-3,2}=y - x y z,\qquad \psi_{-3,3}=x y + y z,\qquad \psi_{-3,4}=x + z,\\
 \psi_{1,1}= y + x y z,\qquad  \psi_{1,2}= -x + z,\qquad  \psi_{1,3}=-x y + y z,\qquad \psi_{1,4}=1 + x z.
\end{gather*}
Finally, one gets eight eigenfunctions of the Schr\"odinger operator
$\hh_0$
by scaling each of these functions by the gauge factor $\mu$.
\end{example}

In general, there is no a-priori method for testing whether a given
dif\/ferential operator is Lie algebraic or quasi-exactly solvable.
However, one can try to perform a classif\/ication of these operators
under local equivalence using the four-step general method of
classif\/ication
described by Gonz\'alez-L\'opez, Kamran and Olver in \cite{const}.

The f\/irst step toward the classif\/ication of normalizable quasi-exactly solvable
Schr\"odinger opera\-tors is to classify the f\/inite dimensional Lie
algebras of f\/irst order dif\/ferential operators up to dif\/feomorphism
and rescaling. Then, the task is to determine which of these
equivalence classes admit a f\/inite dimensional $\g$-module $\nn$ of
smooth functions.
 Then, from the quasi-exactly solvable Lie algebras
found in the second step, one can construct second order
dif\/ferential operator as described in (\ref{liealge}) from any
choice of coef\/f\/icients $C_{ab}$, $C_c$, and $C_0$. The third step
consists to determine which of these operators are equivalent to
Schr\"odinger operators and this can be performed by verifying the
closure condition. Finally, the last step in this classif\/ication
problem is to check if  the functions contained in the
$\widetilde{\g}$-module $\widetilde{\nn}$
 are square integrable.

As mentioned previously,  the entire classif\/ication has been
established in one dimension. In the scope of the f\/irst two steps,
every quasi-exactly solvable Lie algebra is locally equivalent to a
subalgebra of the Lie algebra
\[\g_n={\rm Span}\left\{ \ \frac{\partial}{\partial x},x\frac{\partial}{\partial
x},x^2\frac{\partial}{\partial x}-nz \ ,1\right\},
  \] where $n$ is a non negative integer, see \cite{QES} for more details.
   Then, once a second order dif\/ferential operator is constructed,
  since all one forms are closed in one dimension, such operator will always be equivalent to
a Schr\"odinger operator, reducing the third step to a trivial step.
Finally Gonz\'alez-L\'opez, Kamran and Olver  determined in \cite{oneD}
necessary and suf\/f\/icient conditions for the normalizability of the
eigenfunctions of the quasi-exacly solvable
Schr\"odinger operators.

In two dimensions, the f\/irst two steps of the classif\/ication problem
were determined by the same authors in \cite{QES2c} and \cite{La2r}.
Based upon Lie's classif\/ication of Lie algebras of vector f\/ields,
see~\cite{Lie}, a complete classif\/ication of the quasi-exactly
solvable Lie algebras $\g$ of f\/irst order dif\/ferential operators,
together with their f\/inite dimensional $\g$-modules, was completed.
The case of two complex variables is discussed in the f\/irst two
papers while the third paper completed the classif\/ication by
considering operators on two real variables. However,
 the last two steps are not yet completed but a wide variety of normalizable
quasi-exactly solvable Schr\"odinger operators has been exhibited, see for instance
\cite{const, QES} and~\cite{La2r}.

In the next section, a partial classif\/ication of quasi-exactly solvable Lie algebras
of f\/irst order dif\/ferential operators in three dimensions is given.
While these two f\/irst steps were successfully completed in one and
two dimensions, only part of this work is now done in three
dimensions. However, these new quasi-exactly solvable Lie algebras can be used to seek
new quasi-exactly solvable Schr\"odinger operators in three dimensional space. The last
section of this paper is devoted to the description of new quasi-exactly solvable
Schr\"odinger operators in three dimensions. Eigenvalues are also
computed for two families of Schr\"odinger operators. These
eigenvalues are part of the spectrum of the operators and their
eigenfunctions, together with their nodal surfaces, are exhibited.
In addition, a connection is made between the separability theorem
proved in \cite{Moi} and the quasi-exactly solvable Schr\"odinger operators on f\/lat
manifold. The quasi-exactly solvable models obtained in our paper are new as far as we
can tell. In particular they are not part of the list of
multi-dimensional quasi-exactly solvable models obtained in \cite{U2} by the method of
inverse separation of variables.

\section[Classification of quasi-exactly solvable Lie algebras of first order differential operators]{Classif\/ication of quasi-exactly solvable Lie algebras\\ of f\/irst order dif\/ferential operators}

\subsection[Lie algebras of first order differential operators]{Lie algebras of f\/irst order dif\/ferential operators}

Our goal in this section is to give a partial classif\/ication of quasi-exactly solvable
Lie algebras of f\/irst order dif\/ferential operators in three
dimensions. A f\/irst step toward this goal is to obtain a~classif\/ication of the f\/inite dimensional Lie algebras $\g$ of f\/irst
order dif\/ferential operators. After this is done, the next step is
to impose the existence of an explicit f\/inite dimensional
$\g$-module~$\nn$ of smooth functions. To this end, we will f\/irst
summarize the basic theory underlying
the classif\/ication of Lie algebras of f\/irst order dif\/ferential operators.

 For $\textbf{M}$ an
$n$-dimensional manifold, we denote by $\fm$ the space of smooth
real-valued functions and $\vm$ the Lie algebra of vector f\/ields on
$\m$. The space $\fm$ form a $\vm$-module under the usual derivation
$\eta \rightarrow v(\eta)$, where $ v$ is a vector f\/ield in $\vm$
and $\eta $ a function in $\fm$. The Lie algebra of f\/irst order
dif\/ferential operators $\dm$ can be described as a~semidirect
product of these two spaces, $\dm=\vm\ltimes \fm$. Indeed, each
element $T$ in~$\dm$ can be written into a sum $T=v+\eta$ and the
Lie bracket is given by
 \begin{equation}\label{cocycle} [T^1,T^2]=[v^1,v^2]+v^1(\eta^2)-v^2(\eta^1), \qquad \textrm{where}\quad
 T^i=v^i+\eta^i \in \dm.
\end{equation} Note that the space $\fm$ is also a $\dm$-module with $T(\zeta)=v(\zeta)+\eta\cdot\zeta.$
Consequently, any f\/inite dimensional Lie algebra of f\/irst order
dif\/ferential operators $\g$ can be written as
 \begin{equation}\label{lie-alge2} T^1=v^1+\eta^1,\qquad \dots,\qquad T^s=v^s+\eta^s,\qquad
 T^{s+1}=\zeta^{1},\qquad \dots,\qquad T^{s+r}=\zeta^{r},
\end{equation}
where $v^1,\dots,v^s$ are linearly independent vector f\/ields spanning
$\h\subset\vm$, a $s$-dimensional Lie algebra and where the
functions $\zeta^1,\dots,\zeta^r$ act as multiplication operators and
span $\mm\subset \fm$ a f\/inite dimensional $\h$-module. Note that
restrictions need to be imposed to the functions $\eta^i$ for $\g$
to be a Lie algebra. Indeed, without the cohomological conditions
that will be described below, the Lie bracket
given in (\ref{cocycle}) does not necessarily return an element in the Lie algebra $\g$.

For $T=v+\eta$, we def\/ine a $1$-cochain  $F:\h\rightarrow \fm$ by
the linear map $\langle F;v \rangle=\eta$. Since any function
$\zeta\in \mm$ can be added to $T$  without changing the Lie algebra
$\g$, this map is not well def\/ined.  To deal with this issue, we
should therefore interpret $F$ as a $\fm/\mm$-valued $1$-cochain.
Thus, from the Lie bracket given in (\ref{cocycle}), it is
straightforward to see that $\g$ is a Lie algebra if and only if the
$1$-cochain $F$ satisf\/ies the bilinear identity
\begin{equation}\label{commutator}
v^i\langle F;v^j \rangle-v^j\langle F;v^i \rangle-\langle
F;[v^i,v^j] \rangle \in \mm,  \qquad v^i,v^j\in \h.
\end{equation}
In terms of Lie algebra cohomology, this condition can be restated
as follow, $ \langle \delta_1 F; v^i,v^j \rangle \in \mm$ for all
$v^i,\ v^j$ in $\h$, i.e.\ $F$ is a $\fm/\mm$-valued
$1$-cocycle on $\h$. (See \cite{fulton} for a detailed
description of Lie algebra cohomology.)

 This classif\/ication of Lie
algebras of f\/irst order dif\/ferential operators would not be complete
without considering the local equivalences between the Lie algebras.
Indeed, if a gauge transformation with gauge factor $\mu=e^\lambda$,
is performed  on an operator $T=v+\eta$ in $\g$, the resulting
dif\/ferential operator $\widetilde{T}=e^\lambda \cdot T \cdot
e^{-\lambda}=v+\eta-v(\lambda) $
 will only dif\/fer from $T$ by the addition of a
multiplication operator $v(\lambda)$. Again, this can be expressed
in cohomological terms. Indeed, under the $0$-coboundary map
$\delta_0 :\h\rightarrow\fm/\mm$ def\/ined by $\langle \delta_0
\lambda;v \rangle=v(\lambda)$, the multiplication factor
$v(\lambda)$ can be interpreted as the image, or the $0$-coboundary,
of the function $\lambda$.  Hence, combining these two observations,
it is possible to conclude that the map $F$ is an element in
$H^1(\h,\fm/\mm)=\ker \delta_1/ \textrm{Im} \delta_0$. Thus, if two
dif\/ferential operators $\g$ and $\widetilde{\g}$ are equivalent with
respect to a change of variables $\varphi$ and a gauge
transformation given by $\mu=e^\lambda$, these two operators will
correspond to equivalent triples $(\h,\mm,[F]),$ and
$(\widetilde{\h},\widetilde{\mm},[\widetilde{F}])$, where
$\widetilde{\h}=\varphi_*(\h)$, $\widetilde{\mm}=\varphi_*(\mm)$,
and $ \widetilde{F}=\varphi_* \circ F \circ
\varphi_*^{-1}+\delta_0\lambda$.
This is summarized in the following theorem.

\begin{theorem}
There is a one to one correspondence between equivalence classes of
finite dimensional Lie algebras $\g$ of first order differential
operators on $\m$ and equivalence classes of triples $(\h,\mm,[F]),$
where\begin{enumerate}\itemsep=0pt
       \item[1)] $\h$ is a finite dimensional Lie algebra of
       vector fields;
       \item[2)] $\mm$ is a finite dimensional $\h$-module of
       functions;
       \item[3)] $[F]$ is a cohomology class in $H^1(\h,\fm/\mm)$.
     \end{enumerate}
\end{theorem}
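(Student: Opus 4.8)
The plan is to establish the bijection by constructing the two maps explicitly and checking they respect the equivalence relations, relying on the cohomological bookkeeping already set up in the text. First I would define the forward map: given a finite dimensional Lie algebra $\g\subset\dm$, decompose it using the semidirect structure $\dm=\vm\ltimes\fm$. Let $\mm=\g\cap\fm$ be the pure multiplication operators in $\g$; this is clearly a finite dimensional subspace of $\fm$. Projecting $\g$ onto $\vm$ along $\fm$ yields a finite dimensional subspace $\h\subset\vm$, and from \eqref{cocycle} one checks it is closed under bracket, so $\h$ is a Lie algebra of vector fields. That $\mm$ is an $\h$-module follows by computing $[v+\eta,\zeta]=v(\zeta)\in\g\cap\fm=\mm$ for $\zeta\in\mm$ and any $v+\eta\in\g$. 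Picking a linear section $v\mapsto T_v=v+\eta_v$ of the projection $\g\to\h$ defines the $1$-cochain $\langle F;v\rangle=\eta_v$, well defined modulo $\mm$ because two sections differ by an $\mm$-valued map; the cocycle identity \eqref{commutator} is exactly the statement that $\g$ is closed under the bracket. So $\g\mapsto(\h,\mm,[F])$.

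Next I would define the backward map: from a triple $(\h,\mm,[F])$, choose a representative cocycle $F$ and a basis $v^1,\dots,v^s$ of $\h$ together with a basis $\zeta^1,\dots,\zeta^r$ of $\mm$, and set $T^a=v^a+\langle F;v^a\rangle$ for $1\le a\le s$ and $T^{s+b}=\zeta^b$, exactly as in \eqref{lie-alge2}. The cocycle condition \eqref{commutator} guarantees that the span of these operators is closed under the Lie bracket \eqref{cocycle}, hence is a finite dimensional Lie algebra of first order differential operators. One then verifies that this span is independent of the choice of bases and of the representative $F$ within $[F]$: changing $F$ by a coboundary $\delta_0\lambda$ replaces each $T^a$ by $e^\lambda T^a e^{-\lambda}$, which as noted in the text generates the same span (since the coboundary values $v^a(\lambda)$ lie in $\fm$ and the collection is merely re-expressed), so the output Lie algebra depends only on the class. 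Composing the two constructions in either order returns the original data up to the identifications just described, giving mutual inverses.

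Finally I would check compatibility with the equivalence relations. On the Lie algebra side, two algebras $\g,\widetilde{\g}$ are equivalent if related by a change of variables $\varphi$ and a gauge $\mu=e^\lambda$; under $\varphi_*$ the vector field part transforms as $\widetilde\h=\varphi_*(\h)$, the module as $\widetilde\mm=\varphi_*(\mm)$, and tracking the multiplication operators through $\varphi_*$ and the gauge shift gives precisely $\widetilde F=\varphi_*\circ F\circ\varphi_*^{-1}+\delta_0\lambda$, which is the stated equivalence of triples (in particular $[\widetilde F]$ is well defined on cohomology since $\varphi_*$ intertwines $\delta_0$ and $\delta_1$). Conversely, an equivalence of triples is realized by applying the corresponding $\varphi$ and $\mu$ to the operators built by the backward map. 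Thus the bijection of objects descends to a bijection of equivalence classes.

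I expect the main obstacle to be purely the well-definedness bookkeeping rather than any deep idea: showing that the forward map does not depend on the choice of linear section, that the backward map does not depend on the choice of cocycle representative or bases, and that the gauge transformation $\mu=e^\lambda$ acting operator-wise corresponds exactly on the nose to adding the coboundary $\delta_0\lambda$ at the cochain level — these require careful but routine use of \eqref{cocycle}, \eqref{commutator}, and the definitions of $\delta_0,\delta_1$. Care is also needed to confirm that $\mm=\g\cap\fm$ really is all of the multiplication part (so that the projection $\g\to\h$ has the expected kernel), which follows from the fact that adding any $\zeta\in\mm$ to an operator leaves $\g$ unchanged, a point already emphasized in the text.
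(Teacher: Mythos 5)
Your proposal is correct and follows essentially the same route as the paper, which presents this theorem as a summary of the preceding discussion (the semidirect decomposition $\dd^1(\m)=\vv(\m)\ltimes\ff(\m)$, the identification of the cocycle condition \eqref{commutator} with closure under the bracket \eqref{cocycle}, and the interpretation of gauge factors as $0$-coboundaries). One minor imprecision: two representatives $F$ and $F+\delta_0\lambda$ of the same class do \emph{not} produce the same span unless $v(\lambda)\in\mm$ for all $v\in\h$ — they produce gauge-equivalent Lie algebras — but since you ultimately pass to equivalence classes on both sides, this does not affect the validity of the argument.
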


Hence the general classif\/ication of f\/inite dimensional Lie algebras
 of f\/irst order dif\/ferential operators $\g$ can be bring down to the classif\/ication of triples $(\h,\mm,[F])$
under local changes of variables.

In three dimensions, a complete local classif\/ication of the f\/inite
dimensional Lie algebras of vector f\/ields $\h$ has been established
by Lie in \cite{Lie} and Amaldi in \cite{Ama}. Lie's classif\/ication
distinguishes between the imprimitive Lie algebras, for which their
exists an invariant foliation of the manifold, and the primitive Lie
algebras, for which no such foliation exists. Lie's work gives a
description of the eight dif\/ferent classes of primitive Lie algebras
and, based under the possible foliations of the manifold, the
imprimitive Lie algebras are subdivided
 into the following three types:
\begin{enumerate}\itemsep=0pt
  \item[I.] The manifold admits locally an invariant foliation by surfaces that does not decompose into
  a foliation by curves.
  \item[II.] The manifold admits locally an invariant foliation by curves not contained in a foliation by surfaces.
  \item[III.] The manifold admits locally an invariant foliation by surfaces that does decompose into
  a~foliation by curves.
\end{enumerate}
Observe that these three types are not necessarily exclusive. For
instance, the Lie algebra $\h=\{ p, q, xq, xp-yq, yp, r \}$
belongs to the f\/irst two types. The underlying manifold
$\mathbb{R}^3$ admits a f\/irst indecomposable foliation by planes
$\Delta:=\{ z=\textrm{constant} \}$ and also admits an second
invariant foliation by straight lines $\Phi:=\{ x=\textrm{constant}
\} \cap \{ y=\textrm{constant} \} $ not contained in any invariant
surfaces. Lie classif\/ied the algebras of type~I and~II, giving
respectively twelve and twenty-one  dif\/ferent classes of Lie
algebras. Few years latter, the $103$ classes of Lie algebras of the
third type were exhibited by Amaldi.

The number of f\/inite dimensional Lie algebras
 of vector f\/ields $\h$ is large and it did not seem reasonable to
 consider all the $154$ classes.  For this f\/irst classif\/ication attempt,
 we have chosen to
focus on the algebras which seem promising in our aim to
 construct new quasi-exactly solvable  Schr\"odinger operators. The selection was made upon
 the following criteria.

We f\/irst narrowed our choice based on the results given in
\cite{Moi}; provided the Lie algebra $\g$ is imprimitive and its
invariant foliation consists of surfaces, one can show, adding some
other hypothesis on the metric induced, that a Lie algebraic
Schr\"odinger operator generated by $\g$ separates partially in either
Cartesian, cylindrical or spherical coordinates. Since such algebras
are good candidates for generating interesting quasi-exactly solvable Schr\"odinger
operators, we restricted our search on the type~I and  type~III imprimitive algebras. In this paper, the classif\/ication of the
twelve type~I Lie algebras is entirely performed while, for the
type~III Lie algebras, we focused on some of the most general Lie
algebras. Since the induced metric $g^{(ij)}$ needs to be
non-degenerate, the type~III Lie algebras involving only one or
two of the three partial derivatives were discarded. Finally we
selected our algebras among those that contain other type~III
algebras
 as subalgebras.

\subsection[Classification of Lie algebras of first order differential operators]{Classif\/ication of Lie algebras of f\/irst order dif\/ferential operators}

Using the equivalence between the Lie algebras of f\/irst order
dif\/ferential operators $\g$ and triples $(\h,\mm,[F])$, it is
possible to determine the Lie algebras $\g$ from the selected Lie
algebras of vector f\/ields $\h$. But f\/irst, recall that the second
step in the classif\/ication of quasi-exactly Schr\"odinger operators is
to determine which of these Lie algebras of f\/irst order dif\/ferential
operators $\g$ are quasi-exactly solvable. It is not hard to see that if $\g$ is
quasi-exactly solvable with non trivial f\/ixed module $\nn$, the Lie algebra $\g$ is
f\/inite dimensional if and only if $\mm$ is the module of constant
functions, see \cite{QES2c} for details. Therefore, instead of
working on the general classif\/ication of Lie algebras of f\/irst order
dif\/ferential operators $\g$, we will restrict our work to the
equivalence classes of triples $(\h,\{1\},[F]).$  Thus, for each of
the selected Lie algebras $\h$, we f\/irst seek for the possible
cohomology classes, $[F]$ in $H^1(\h,\fm/\{1\})$. Once this is done,
in the scope of the second step, it will be left to f\/ind if there
exists an explicit f\/inite dimensional $\g$-module $\nn$, where $\g$
is the Lie algebra equivalent to the triple $(\h,\{1\},[F]).$ Note
that, as in lower dimensions, the existence of a nontrivial module
$\nn$ will impose a ``quantization'' condition on $[F]$. Indeed, for
each of the Lie algebras worked out in this paper, the possible
values for the functions in $[F]$ can only be taken in a discrete
set. For detailed results related to the quantization
 of cohomology, see \cite{Quant} and \cite{Quant2}.

\subsubsection[Classification of the  cohomology classes ...]{Classif\/ication of the  cohomology classes  $\boldsymbol{[F]}$ in $\boldsymbol{H^1(\h,\fm/\{1\})}$}

To determine the possible cohomology classes, we f\/irst start with
$[F]$ as general as possible. For every $v$ in the Lie algebra $\h$,
we denote the value of the $1$-cocycle $\langle F;v \rangle$ by
$\eta_v$, and $\eta_v$ can be any function in $\fm/\{ 1\}$. Our aim
is to f\/ind the most general $1$-cocycle $F$, that is the most
general functions $\eta_v$, satisfying the restrictions imposed by
the $1$-cocycle conditions $(\ref{commutator})$. Then, using the
$0$-coboundary map, we try to describe the class $[F]$ with
representatives $\eta_v$ as simple as possible. Finally, if
$\{v^1,\dots,v^r \}$ is a basis for $\h$, the set $\{
v^1+\eta_{v^1},\dots,v^r+\eta_{v^r}\}$ will be a basis for the Lie
algebra $\g$. Note that in this process, one can alternate the use
of the $1$-cocycle restrictions with the use of the $0$-coboundary
cancellations. For instance, if the element $p$ belongs to the
algebra~$\h$, the function $\langle F;p \rangle=\eta_p$ can be
annihilated by the image of the function $\Psi_p=\int \eta_p dx$
under the $0$-coboundary map. Indeed $\langle \delta_0 \Psi_p;p
\rangle=p(\int \eta_p dx)=\eta_p$ and $\widetilde{F}=F-\delta_0
\Psi_p$ belong to $[F]$. Thus, we can assume the function $\eta_p$
to be equivalent to the zero function. Then for another vector f\/ield
$v$ in $\h$, using the $1$-cocycle restriction for the pair $(p,v)$,
that is
\[p\langle F;v \rangle-v\langle F;p \rangle-\langle F;[p,v] \rangle
=p\eta_v-0-\eta_{[p,v]}\in \{1\}, \] one obtains conditions on the
two functions $\eta_v$ and $\eta_{[p,v]}$. Once again, one might try
to absorb part of the function $\eta_v$ with $\delta_0 \Psi_v$, the
 image of another function $ \Psi_v$. Note that,
in order to maintain $\eta_p\equiv 0$, a restriction is imposed on
$\Psi_v$. Indeed, when the $1$-cocycle $F+\delta_0 \Psi_v$ is
applied to $p$, we have to avoid reintroducing a function for
$\eta_p$. Thus we need to consider only the functions $\Psi_v$ for
which
 $\langle \delta_0p;\Psi_v \rangle = (\Psi_v)_x$ is a constant
function. Then, to complete the determination of $[F]$, the same
process is preformed to every vector f\/ield of $\h$, with some care
in the choices of the $0$-coboundary maps, avoiding to undo the
simplif\/ications
 done in the previous steps.

The results of this partial classif\/ication of cohomology classes
$[F]$, that gives a partial classif\/ication of Lie algebras  of
dif\/ferential operators $\g$, are summarized in Tables $1$ and $2$ at
the end of this section. The f\/irst table gives a $1$-cocycle
representative for the twelve type~I Lie algebras and Table~2
exhibits the results for some general Lie algebras of vector f\/ields
among the type~III Lie algebras. For these two tables, the
classif\/ication numbers, given respectively by Lie and Amaldi, sit in
the the f\/irst column. The second column gives a basis for the Lie
algebra~$\h$ and the third column exhibits the f\/irst order
dif\/ferential operators $v+\eta_v$ for which $\langle F;v
\rangle=\eta_v$ in not trivial. $\eta_v$ is taken to be the simplest
representative and when the function $\eta_v$ is trivial, the
dif\/ferential operator is simply the vector f\/ield exhibited in the
second column.

It would be impractical to present the details of the computations
in all cases. Furthermore, the arguments are quite similar for all
 Lie algebra $\h$ of vector f\/ields. So, for brevity's sake, we
will only give the details for two of the selected Lie algebras. The
chosen examples illustrate well the general process and will give to
the reader a good
idea of how the calculations proceed in general.

{\bf Type I, case 1.}
 This Lie algebra $\h$ is spanned by the eight vector f\/ields
$p$, $q$, $xp$, $yq$, $xq$, $yp$, $x^2p+xyq$ and $ xyp+y^2q$. The vector f\/ield $p$
belongs to the Lie algebra, hence, as mentioned previously, the
function $\eta_p$ can be assumed to be zero. The $1$-cocycle
condition for the pair $(p,q)$ imposes the following restriction \[
\langle \delta_1 F; p,q \rangle=(\eta_q)_x-(\eta_p)_y-\langle F;
[p,q]\rangle=(\eta_q)_x \in \{ 1\}.\] Thus $\eta_q=c_qx+h_q(y,z)$,
where $c_q$ is a constant.
 Hopefully, the function $h_q(y,z)$ can be absorbed by the image, under
the $0$-coboundary map, of the function $\Psi_q=\int h_q(y,z) dy$.
Since $(\Psi_q)_x$ is zero, the $0$-coboundary of $\Psi_q$ will not
af\/fect  $\eta_p$.

 Similarly, by considering the pair $(p,xp)$, one
concludes that $\eta_{xp}=c_{xp}x+h_{xp}(y,z)$, where $c_{xp}x$ can
be canceled, without changing the previous functions, by the
$0$-coboundary of the function $\Psi_{xp}=c_{xp}x$. Then, for the
pair $(q,xp)$, the restriction reads as \[\langle \delta_1 F; q,xp
\rangle=(\eta_{xp})_y-x(\eta_q)_x-\langle F;
[q,xp]\rangle=(h_{xp}(y,z))_y-x\cdot c_q \in \{ 1\}.\] Necessarily,
since $h_{xp}$ depends only on $y$ and $z$,  the constant $c_q$ has
to be zero and the function $h_{xp}(y,z)$ is forced to be of the
form $d_{xp}y+K(z)$, where $d_{xp}$ is a constant. Thus, at this
point, $\eta_p=0$, $\eta_q=0$
and $\eta_{xp}=d_{xp}y+K(z)$.

Consider now  the three vector f\/ields $yq$, $xq$ and $yp$. If we
pair each of them with $p$ and $q$, from the six $1$-cocycle
restrictions, one obtains directly the following
\begin{gather*}
\eta_{yq}=c_{yq}x+d_{yq}y+k_{yq}(z),\!\!\qquad
\eta_{xq}=c_{xq}x+d_{xq}y+k_{xq}(z),\!\!\qquad
\eta_{yp}=c_{yp}x+d_{yp}y+k_{yp}(z).
\end{gather*} With the image
of the function $\Psi_{yq}=d_{yq}y$, the function $\eta_{yq}$ can be
reduced to $\eta_{yq}=c_{yq}x+k_{yq}(z)$ without undoing the
previous work. From the restriction associated to the pair
$(xp,yp)$, one easily check that
\[ \langle \delta_1 F; xp,yp
\rangle=x(\eta_{yp})_x-y(\eta_{xp})_x+\eta_{yp}=x\cdot
c_{yp}+c_{yp}x+d_{yp}y+k_{yp}(z) \in \{1\},\] forcing $c_{yp}$ and
$d_{yp}$ to be zero and $k_{yp}(z)$ to be a constant function.
Similarly, by considering the pair $(xp,yq)$, one obtains that the
function
$x\cdot c_{yq}-y\cdot d_{xp}$ must be constant, hence  $c_{yq}$ and
$d_{xp}$ are zero. To completely determine the functions $\eta_v$
for these three vector f\/ields, two restrictions, associated to the
pairs $(xp,xq)$ and $(yp,xq)$, must be verif\/ied. The f\/irst imposes
that $x(\eta_{xq})_x-x(\eta_{xp})_y-\eta_{xq}=x\cdot
c_{xq}-c_{xq}x-d_{xq}y-k_{xq}(z)$ must be constant. Thus it leaves
no choice but to take $d_{xq}$ as the constant zero and $k_{xq}(z)$
as a constant function. Finally, the last restriction forces
$y(\eta_{xq})_x-x(\eta_{yp})_y-\eta_{yq}+\eta_{xp}=y\cdot
c_{xq}-k_{yq}(z)+K(z)$ to be a constant, hence~$c_{xq}$ must be zero
while $k_{xq}(z)$ must be equal, modulo the constant functions, to
the function $K(z)$. Putting together these restrictions, the image
of the $1$-cocycle $F$ for the f\/irst six vector f\/ields of~$\h$ can
be described as $\eta_p=\eta_q=\eta_{xq}=\eta_{yp}=0$ and
$\eta_{xp}=\eta_{yq}=K(z)$. One easily checks that the remaining two restrictions are satisf\/ied.

To determine completely the $1$-cocycle $F$, it remains to f\/ind its
images for the two vector f\/ields $T:=x^2p+xyq$ and $Q=xyp+y^2q$. For
$\eta_T$, three restrictions are needed to reach that
$\eta_T=3xK(z)$. Indeed, from the pair $(p,T)$, the cocycle
condition forces the following equality
$(\eta_T)_x-2\eta_{xp}-\eta_{yq}=c_T$, where $c_T$ is a constant. It
is not hard to see that $\eta_T$ must be equal to
$3xK(z)+c_Tx+h_T(y,z)$. From the pair $(q,T)$, we get similarly that
$\eta_T=3xK(z)+c_Tx+d_Ty +k_T(z)$. Finally, the restriction for the
pair $(xp,T)$ leads to\[x(\eta_T)_x-T(\eta_{xp})-\eta_T=x\cdot3
K(z)+x \cdot c_T-(3xK(z)+c_Tx+d_Ty +k_T(z))\in \{1\}.\] Hence the
constant $d_T$ dies out and $k_T(x)$ has to be a constant function.
Note that, with these $3$ restrictions, $\eta_T=3x(K(z)+c_T/3)$,
but, by taking $\eta_{xp}= K(z)+c_T/3$,
 one gets the claimed result. By symmetry on $x$ and $y$, the exact
same arguments lead to $\eta_Q=3yK(z)$. It is then straightforward
to verify that the $1$-cocycle $F$, given by the eight functions
\begin{gather*}
\eta_p=\eta_q=\eta_{xq}=\eta_{yp}=0, \!\!\qquad \eta_{xp}=\eta_{yq}=K(z),\!\! \qquad \eta_P=3xK(z)
\!\!\qquad \textrm{and}\!\! \qquad\eta_Q=3yK(z),
\end{gather*}
 satisf\/ies all the other
$1$-cocycle conditions. Finally, the Lie algebra $\g$ associated to
this triple $(\h,\{1\},[F])$ is the Lie algebra spanned by{\samepage
 \[ \big\{p,q,xp+K(z),yp,xq,yq+K(z),x^2p+xyq+3xK(z),xyp+y^2q+3yK(z),1 \big\},\]
 where $K(z)$ can be any function.}

Fortunately the calculations performed for a given Lie algebra $\h$
can be repeated for any other Lie algebra sharing a subset of
generators with $\h$. Note also that some ad hoc lemma's were used
trough this work to simplify
these calculations. For instance.

\begin{lemma} \label{lemmef} Let $i:\mathbb{R}^2\rightarrow \mathbb{R}^3 $, $(x,y)\mapsto (x,y,z)$
denote the inclusion map and suppose that $\h_0\subset\Gamma(i_*
T\mathbb{R}^2)$, meaning that the generators of $\h_0$ depend on the
variables $x$ and $y$ only. Let~$\h$ be a Lie algebra of vector
fields on $\mathbb{R}^3$ given by $\h=\h_0\oplus\{r,zr,z^2r\}$. If,
for non constant functions $f(x,y)$ and $g(x,y)$, the vector fields
$f(x,y)p$ and $g(x,y)q$ belong to $\h$ and if their associated
images $\eta_{f(x,y)p}$ and $\eta_{g(x,y)q}$ depend on $x$ and $y$
only, then \[H^1(\h,\ff (\mathbb{R}^3 )/\{1\})=H^1(\h_0,\ff
(\mathbb{R}^2)/\{1\})\oplus H^1(\{ r,zr,z^2r\},\ff
(\mathbb{R})/\{1\}).\]
\end{lemma}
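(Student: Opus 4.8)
The plan is to construct an explicit isomorphism $\Phi$ realising the claimed splitting. I would write $\h_1:=\{r,zr,z^2r\}$, so that $\h=\h_0\oplus\h_1$ and, crucially, $[\h_0,\h_1]=0$, since the generators of $\h_0$ involve only $p,q$ with coefficients depending on $x,y$ while $r,zr,z^2r$ involve only $z$. Regarding $\ff(\mathbb{R}^2)/\{1\}$ and $\ff(\mathbb{R})/\{1\}$ as submodules of $\ff(\mathbb{R}^3)/\{1\}$ in the obvious way, I would define, for cocycles $F_0$ on $\h_0$ (valued in $\ff(\mathbb{R}^2)/\{1\}$) and $F_1$ on $\h_1$ (valued in $\ff(\mathbb{R})/\{1\}$), their zero-extensions $\widetilde F_0,\widetilde F_1$ to $\h$, and set
\[
\Phi\colon\ H^1(\h_0,\ff(\mathbb{R}^2)/\{1\})\oplus H^1(\h_1,\ff(\mathbb{R})/\{1\})\ \longrightarrow\ H^1(\h,\ff(\mathbb{R}^3)/\{1\}),\qquad \Phi([F_0],[F_1]):=[\widetilde F_0+\widetilde F_1].
\]
That $\widetilde F_0+\widetilde F_1$ is a $1$-cocycle reduces, since the conditions internal to $\h_0$ and to $\h_1$ are inherited, to the mixed conditions for pairs $(v,w)$ with $v\in\h_0$, $w\in\h_1$; as $[v,w]=0$ these read $v\langle\widetilde F_i;w\rangle-w\langle\widetilde F_i;v\rangle\in\{1\}$, and both terms vanish because in each of them either the argument lands on $0$ or one differentiates a function of the ``wrong'' variables along a field in the other variables. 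The same remark shows $\Phi$ sends coboundaries to coboundaries, so $\Phi$ is well defined and linear; it remains to prove bijectivity.

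The bulk of the work is surjectivity. Given an arbitrary $1$-cocycle $F$ on $\h$, with $\eta_v:=\langle F;v\rangle$, the key claim is that $\eta_r,\eta_{zr},\eta_{z^2r}$ may be taken to depend only on $z$, and this is exactly where the hypotheses are used. Fixing $w\in\{r,zr,z^2r\}$, since $f$ and $g$ depend only on $x,y$ one has $[w,f(x,y)p]=[w,g(x,y)q]=0$, so the cocycle conditions for the pairs $\bigl(w,f(x,y)p\bigr)$ and $\bigl(w,g(x,y)q\bigr)$ become
\[
w\langle F;f(x,y)p\rangle-f(x,y)\,\partial_x\eta_w\in\{1\},\qquad w\langle F;g(x,y)q\rangle-g(x,y)\,\partial_y\eta_w\in\{1\}.
\]
By assumption $\langle F;f(x,y)p\rangle$ and $\langle F;g(x,y)q\rangle$ depend only on $x,y$, so the first term in each relation vanishes; hence $f\,\partial_x\eta_w$ and $g\,\partial_y\eta_w$ are constant, and dividing by the non-constant $f$, resp. $g$, shows $\partial_x\eta_w$ and $\partial_y\eta_w$ are independent of $z$, so $\eta_w=a_w(x,y)+b_w(z)$. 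I would then feed this into the three cocycle conditions internal to $\h_1$, namely those attached to $[r,zr]=r$, $[r,z^2r]=2\,zr$, $[zr,z^2r]=z^2r$: tracking which summands are $(x,y)$-dependent, these conditions say respectively that $-a_r(x,y)$, $-2a_{zr}(x,y)$, $-a_{z^2r}(x,y)$ each equals a function of $z$ plus a constant, which forces $a_r,a_{zr},a_{z^2r}$ to be constant. Thus, modulo constants, $\eta_r,\eta_{zr},\eta_{z^2r}$ are functions of $z$ alone.

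Next I would pair each $w\in\{r,zr,z^2r\}$ with an arbitrary $v\in\h_0$; since $[v,w]=0$ the cocycle condition gives $w(\eta_v)-v(\eta_w)\in\{1\}$, and $v(\eta_w)=0$ because $\eta_w$ now depends only on $z$. Hence $\partial_z\eta_v$, $z\,\partial_z\eta_v$ and $z^2\partial_z\eta_v$ are all constant, which is impossible unless $\partial_z\eta_v=0$; so every $\eta_v$ with $v\in\h_0$ depends only on $x,y$. Consequently $F$ is already in split form: $F_0:=F|_{\h_0}$ is a cocycle valued in $\ff(\mathbb{R}^2)/\{1\}$, $F_1:=F|_{\h_1}$ is a cocycle valued in $\ff(\mathbb{R})/\{1\}$, and $F=\widetilde F_0+\widetilde F_1$ on the nose, so $[F]=\Phi([F_0],[F_1])$. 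For injectivity, suppose $\widetilde F_0+\widetilde F_1=\delta_0\Psi$ with $\Psi\in\ff(\mathbb{R}^3)$; restricting to $r$ gives $\partial_z\Psi=\langle F_1;r\rangle$ modulo a constant, a function of $z$ alone, so $\Psi=\psi_1(z)+\psi_2(x,y)$, whence for $w\in\h_1$ one has $\langle\delta_0\Psi;w\rangle=w(\psi_1)$ and for $v\in\h_0$ one has $\langle\delta_0\Psi;v\rangle=v(\psi_2)$, giving $[F_1]=[\delta_0\psi_1]=0$ and $[F_0]=[\delta_0\psi_2]=0$. This would complete the proof that $\Phi$ is an isomorphism, i.e.\ the stated identity.

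The main obstacle is the key claim in the surjectivity step: pinning down the $z$-dependence of the $\h_1$-values. Everything else is formal linear algebra with Lie algebra cohomology, but it is precisely the presence of the non-constant fields $f(x,y)p,g(x,y)q$ in $\h$, together with the hypothesis that $\eta_{f(x,y)p},\eta_{g(x,y)q}$ carry no $z$-dependence, that forces $\eta_r,\eta_{zr},\eta_{z^2r}$ to be free of $x$ and $y$; without this input the restriction of a cocycle to $\h_1$ could genuinely mix all three variables and the cohomology would not split. The only other point requiring care is the bookkeeping inside $\h_1$ — keeping straight which terms in the three internal cocycle identities are the $(x,y)$-dependent ones so as to conclude the $a_w$ are constant.
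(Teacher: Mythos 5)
Your proof is correct and rests on the same mechanism as the paper's: the mixed cocycle conditions pairing $r$, $zr$, $z^2r$ with $f(x,y)p$ and $g(x,y)q$ force the $\eta_{z^ir}$ to depend on $z$ alone, and then pairing $zr$ with a general $v\in\h_0$ kills the $z$-dependence of $\eta_v$; you simply add the explicit isomorphism, well-definedness and injectivity checks that the paper leaves implicit. If anything you are more careful than the paper at one point: from $f(x,y)(\eta_{z^ir})_x\in\{1\}$ the paper concludes outright that $(\eta_{z^ir})_x$ vanishes, whereas you deduce only $\eta_{z^ir}=a(x,y)+b(z)$ and then use the internal cocycle identities of $\{r,zr,z^2r\}$ to force $a$ to be constant, which closes that small leap.
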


\begin{proof} Denote $A:=f(x,y)p$ and $B:=g(x,y)q$.
From the cocycle restrictions associated to the pairs $(A,z^ir)$,
where $i=0,1,2$, we obtain
\begin{gather*}\langle \delta_1 F; A,z^ir
\rangle=A(\eta_{z^ir})-z^i(\eta_A)_z-\langle  F; [A,z^ir] \rangle
=f(x,y)(\eta_{z^ir})_x-0-\langle F,0 \rangle\\
\phantom{\langle \delta_1 F; A,z^ir \rangle}{} = f(x,y)(\eta_{z^ir})_x\in \{ 1 \}.
\end{gather*}
Since $f(x,y)$ is not constant, $(\eta_{z^ir})_x$ must vanish, hence
the functions $\eta_{z^ir}$ depend on $y$ and $z$. In a similar way,
from the restrictions associated to the pairs $(B,z^ir)$ it is
straightforward to conclude that $\eta_{z^ir}=h^i(z)$. Finally, for
any element $v$ in $\h_0$,  the function $\eta_v$ will depend on $x$
and $y$ only. Indeed, since $\eta_{zr}$ depends on $z$ only,
\begin{gather*}\langle \delta_1 F; v,zr
\rangle=v(\eta_{zr})-z(\eta_v)_z-\langle  F; [v,zr] \rangle
=0-z(\eta_v)_z-\langle F,0 \rangle
= -z(\eta_v)_z\in \{ 1 \}.
\end{gather*} Therefore $(\eta_v)_z$ must be zero, forcing the function
$\eta_v$ to depend on $x$ and $y$ only.
\end{proof}

Note that $H^1(\{ r,zr,z^2r\},\ff (\mathbb{R})/\{1\})$ is already
well known. The  $1$-cocycle $F$  associated the Lie algebra $\h=\{
r,zr,z^2r\}$ is determined by three functions and the simplest
representative is given by $\eta_r=0$, $\eta_{z r}=0$ and $\eta_{z^2
r}=dz$, for any constant $d$. Thus, one can use this lemma to
simplify some of the computations required in this classif\/ication
problem. For instance, given~$\h$ the type~I Lie algebra of
vector f\/ields given by case~10 in Table~1, the Lie algebra of
dif\/ferential operators $\g$ built from $\h$  is obtained from a
direct application of this lemma.

{\bf Type I, case 10.}
 The Lie algebra
$\h={\rm Span}\{p,q,xp,yq,xq,yp,x^2p+xyq,xyp+y^2q, r,zr,z^2r  \}$ can be
decomposed as $\h_0\oplus \{r,zr,z^2r  \}$ where $\h_0$ is the case~1 Lie algebra from the same table. It was shown in the previous
calculations that the functions $\eta_{yp}$ and $\eta_{xq}$ are
zero, hence functions on $x$ and $y$ only.  Thus the case~10 Lie
algebra, along with its two vector f\/ields $yp$ and $xq$, satisf\/ies
the requirements of the Lemma~\ref{lemmef}. Therefore, for the
vector f\/ields in the algebra $\h_0$, the values of the $1$-cocycle
depend on $x$ and $y$ only, forcing $K(z)$ to be $c$ a constant
function. It is then obvious that the $1$-cocycle $F$ is def\/ined by
eleven functions, were the three non-zero are given by $\eta_T=cx,$
$\eta_Q=cy $ and $\eta_{z^2r}=dx$, for $c$ and $d$ any constants.
The Lie algebra of f\/irst order dif\/ferential operators $\g$
corresponding to this triple is then
\[  \g={\rm Span}\{p,q,xp,yq,xq,yp,x^2p+xyq+cx,xyp+y^2q+cy, r,zr,z^2r+dz,  1\}.
\]

It should be pointed here that $H^1(\h,\fm/\{1\})$ and $H^1(\h,\fm)$
can also be determined alternatively using isomorphisms given in
\cite{MilsonCoho} and \cite{Miller}. For the case that interests us,
that is $H^1(\h,\fm/\{1\})$, we f\/ix a base point $e$ and denote
$\mathfrak{i}$ the isotropy subalgebra. Provided the existence of a
subalgebra $\mathfrak{a}\subset\mathfrak{h}$ which is complementary
to $\mathfrak{i}$, one can show, see \cite{MilsonCoho}, that
\[ H^1(\h,\fm/\{1\})\cong H^2(\mathfrak{h}/\mathfrak{i}). \]
This isomorphism leads to an explicit method for constructing
$1$-cocycle representatives $F$ in
 $H^1(\h,\fm/\{1\})$. We f\/irst choose
$\alpha$, a $2$-cocycle representative of a class in
$H^2(\mathfrak{h}/\mathfrak{i})$, and, for $\{ v^1,\dots,v^n\}$ a
basis of $\mathfrak{h}$, we denote $\alpha_{ij}=\alpha(v^i,v^j)$. If
$\mathfrak{a}=\{ v^1,\dots,v^m\}$, $\mathfrak{i}=\{
v^{m+1},\dots,v^n\}$, and $c_{ij}^k$ are the structure constants of
the Lie algebra $\mathfrak{h}$, a $1$-cocycle in $H^1(\h,\fm/\{1\})$
will be obtained by solving f\/irst the following $m(m-1)$ equations
\[v^i(f_j)-v^j(f_i)-\sum_k
c_{ij}^k f_k=\alpha_{ij}, \qquad \textrm{for} \quad 1\leq i<j\leq m.\]
Once a non unique solution $f_1,\dots,f_m$ is obtained, the remaining
functions $f_{m+1},\dots, f_n$  are determined as the unique solution
to the $m(n-m)$ equations
\[v^i(f_j)-v^j(f_i)-\sum_k
c_{ij}^k f_k=\alpha_{ij}, \qquad \textrm{for} \quad 1\leq i \leq m,\quad
m+1 \leq  j\leq n,\] with initial conditions
\[f_i(e)=0  \qquad \textrm{for } \quad m+1\leq i \leq n. \]

Note however that this method can not be applied to all the three
dimensional Lie algebras since the existence of the complementary
Lie subalgebra is not guaranteed. For instance, the type III case
17A$_1$ can not be treated using the isomorphism. Indeed, for the
Lie algebra
\[ \mathfrak{h}=\{p,q,xp+zr,yq,x^2p+(2x+az)zr,y^2q \},\]
and the base point $e=(0,0,0)$, the isotropy algebra $\mathfrak{i}$
 is generated by the last four elements and the algebra
 $\mathfrak{a}=\{p,q \}$ fails to be complementary, due to the
 absence of the element $r$ in the Lie algebra. One can easily
 verify that $Z^2(\mathfrak{h}/\mathfrak{i})=\{ \alpha_1\wedge\alpha_3, \alpha_1\wedge\alpha_5,
  \alpha_2\wedge\alpha_4,\alpha_2\wedge\alpha_6\}$ and
  $B^2(\mathfrak{h}/\mathfrak{i})=\{ \alpha_1\wedge\alpha_3,
  \alpha_2\wedge\alpha_4\}.$ One can observe at that point that the
  theorem does not hold, since the dimension of
  $H^2(\mathfrak{h}/\mathfrak{i})$ is two while the dimension of
  $H^1(\h,\fm/\{1\})$ was computed to be three previously. Moreover
  applying the technique to the $2$-cocycle $\alpha=c \cdot \alpha_1\wedge\alpha_5+
  d \cdot\alpha_2\wedge\alpha_6$, one gets a $1$-cocycle that does
  not satisf\/ies all the conditions that were not considered in the
  technique detailed above.

\subsection[Classification of quasi-exactly solvable Lie algebras of first order differential operators and the quantization condition]{Classif\/ication of quasi-exactly solvable Lie algebras\\ of f\/irst order dif\/ferential operators and the quantization condition}

The Lie algebras given in Tables~1 and 2 are the candidates for
being quasi-exactly solvable Lie algebras, i.e.\ we might expect them to admit
$\nn$ a f\/inite dimensional module of smooth functions~$\nn$. In
the investigation for these explicit f\/inite dimensional modules,
some new restrictions are imposed on the $1$-cocycles $F$. Indeed,
as for the quasi-exactly solvable Lie algebras in lower dimensions, it comes out that
a f\/inite dimensional module exists only if the values of the
functions $\eta_v $ are taken in a certain discrete set. For this
reason, this restriction is named quantization condition. The quasi-exactly solvable
Lie algebras and their f\/ixed modules can be found in Tables $3$ and
$4$ for, respectively, the type~I and  the selected type~III
Lie algebras. The f\/irst column use the same classif\/ication numbers
as in
 Tables $1$ and $2$ and a representative for the non-trivial
quantized $1$-cocycles is exhibited in the second column. Finally,
$\nn$, the f\/inite dimensional $\g$-modules of functions are
described in the last column. Once again the detailed calculations
are repetitive and the essence of the work can be
 grasped with one or two examples, together with the following
 general principles.
\begin{enumerate}\itemsep=0pt
  \item A f\/inite dimensional module for the trivial Lie algebra $\g=\{ p \}$
   is def\/ined as an {\it $x$-translation
  module}. For instance, any space spanned by a f\/inite set of functions
  of the form
\[ h=\sum_{i=0}^n g^i(y,z)x^i,\]
along with all their $x$ derivatives, is an $x$-translation
  module. This particular case of $x$-translation
   module is referred as a {\it semi-polynomial $x$-translation
  module}.
The most general $x$-translation module is obtained by a direct sum
  \[
\nn=\bigoplus_{\lambda \in \Lambda} \nn_\lambda, \qquad
\nn_\lambda=\widehat{\nn_\lambda} e^{\lambda  x},  \] where
$\widehat{\nn_\lambda}$ are semi-polynomial $x$-translation modules
and the exponents are taken in a~f\/inite set $\Lambda$, read
\cite{LA2c} for more details. Obviously, the $y$, and the
$z$-translation modules are def\/ined the exact same way.
  \item If the Lie algebra $\g$ under consideration contains the  two dif\/ferential operators $p$ and $ xp$,
  the module $\nn$ will be an $x$-translation module and the  operator $xp$ will impose extra constraints.
Firstly, all the exponents $\lambda$ need be zero. Otherwise, for an
non-zero exponent~$\lambda$, the degree in $x$ of the generating
functions in the
  module $\widehat{\nn_\lambda}$ would be unbounded,
  contradicting the f\/inite dimensionality of $\nn$.
   Moreover, if $h=\sum_{i=0}^n g^i(y,z)x^i$ belongs to
the module $\nn$, the function $xh_x$ also needs to belong to that
module. Note that both functions have the same degree in $x$ and are
linearly independent if $h$ is not a monomial. Thus, by an
appropriate linear combination of these two functions, one can
reduce the number of summands in $h$.  By
  iterating this process, each generating function can be reduced to a~monomial in $x$. Thus, $h=g(y,z)x^i$
  where $g(y,z)$ belongs to $G^{i}$ a f\/inite set of functions in $y$ and
  $z$. Since $\nn$ is a $x$-translation module,
  $h_x=ig(y,z)x^{i-1}$ is also a function in $\nn$\!, hence
  $g(y,z)$ needs to be also contained in $G^{i-1}$.
   Therefore, the module $\nn$
  decomposes into the following direct sum
  \[ \nn=\bigoplus x^ig^i_k(y,z),  \qquad i=0,\dots,n, \qquad k=0,\dots,l_i,
  \] where all the functions $g^i_k(y,z)$ belong to $G^i$ a f\/inite set and where $G^{i}\subseteq G^{i-1}$.

  \item \label{test} Likewise, if  a Lie algebra $\g$ contains the elements $p$, $q$, $xp$, and $yq$,
    a general f\/inite dimensional $\g$-module for this Lie algebra will be at most
  \[ \nn=\bigoplus
  x^iy^jg_k^{i,j}(z), \qquad i=0,\dots,n, \qquad j=0,\dots, m,\qquad k=0,\dots,l_{(i,j)},
  \]  where the functions $g^{i,j}_k(z)$ belong to $G^{(i,j)}$, a f\/inite set of functions of $z$
  satisfying $G^{(i,j)}\subseteq G^{(i-1,j)}\cap
  G^{(i,j-1)}$.

 A simple method to describe these modules is to
  represent each generating function $x^iy^jg^{i,j}(z)$ by a point $(i,j)$, in the Cartesian
 plane. If a
  vertex $(i,j)$ belongs to the diagram, since $\nn$ is an $xy$-translation module,
   the vertices $(i-1,j)$ and
  $(i,j-1)$ must also sit in the diagram. To complete the description,
   a f\/inite set $G^{(i,j)}$ is associated to each of these vertices, with the same
  restriction as above. For instance, such module $\nn$ can be represented by

 \centerline{\includegraphics[width=30mm]{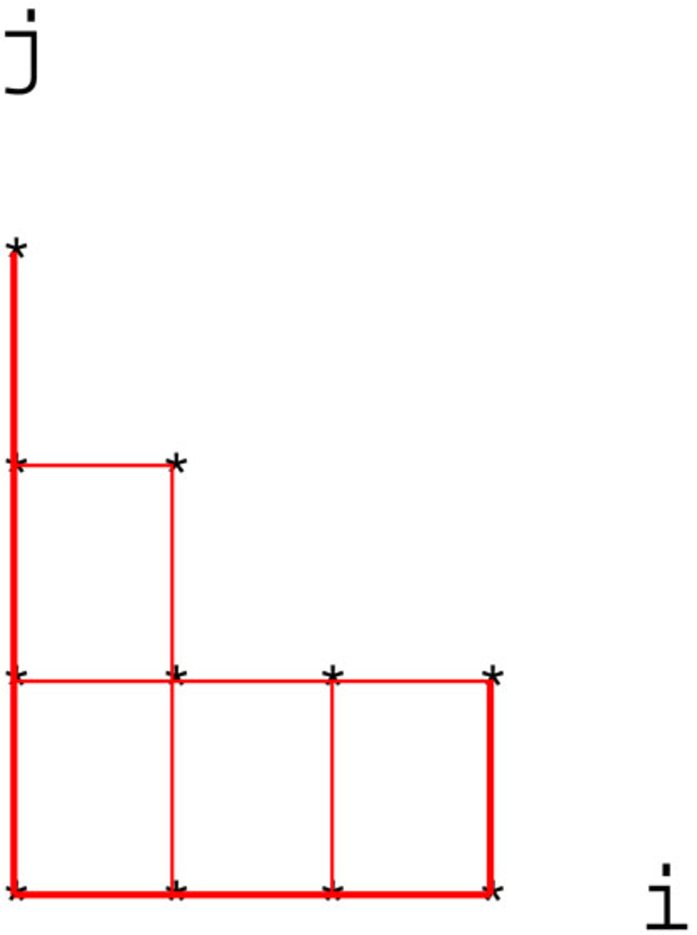}}

 with all the sets $G^{(i,j)}$ being equal to $\{z,e^z\}$, with the exception of $G^{(3,1)}$
 that contains only the function $z$.
 It is then straightforward to verify that this module is indeed
 \begin{gather*} \nn= {\rm Span}\{0,
 z,e^z,xz,xe^z,yz,ye^z,x^2z,x^2e^z,xyz,xye^z,y^2z,\\
\phantom{\nn= {\rm Span}\{}{} y^2e^z,
x^3z,x^3e^z,x^2yz,x^2ye^z,xy^2z,xy^2e^z,y^3z,y^3e^z,x^3yz\},
\end{gather*}and that it is a $\g$-module for the Lie algebra $\g=\{ p,xp,y,yq \}.$

  \item If the Lie algebra $\g$ contains the dif\/ferential operators $p$, $q$, $xp$, $yq$ and $yp$, from the
  three previous principles,
  the generators for a $\g$-module are given by $h=x^iy^jg^{i,j}(z)$. After applying the operator $yp$ on $h$, the resulting function reads as
   $ix^{i-1}y^{j+1}g^{i,j}(z)$. Thus, iterating this operator, we conclude that all the functions
    $x^{i-r}y^{j+r}g^{i,j}(z)$ must belong to~$\nn$, for $r\leq i$.
Since $p$ and $q$ also belong to the algebra, all the functions
$x^ay^bg^{i,j}(y,z)$ with $a\leq i$, $b\leq j$ and $a+b\leq c$ must
belong to
   $\nn$.
 This condition can be expressed by the following inclusion
  $G^{(i,j)}\subseteq G^{(i-1,j)}\cap G^{(i,j-1)}\cap G^{(i-1,j+1)}, $ and observe that
  the f\/irst set
  $G^{(i-1,j)}$ can be omitted without af\/fecting the condition.
   To summarize, the $\g$-module will be at most
   \[ \nn=\bigoplus
  x^iy^jg_k^{i,j}(z), \qquad i=0,\dots,n, \qquad j=0,\dots, m, \qquad k=0,\dots,l_{(i,j)},
  \]  where the functions $g^{i,j}_k(z)$ belong to $G^{(i,j)}$, a f\/inite set of functions  with
  $G^{(i,j)}\!\subseteq\! G^{(i-1,j+1)} \cap
  G^{(i,j-1)}$.

  Once again it is possible to represent such module by a diagram
  along with
  a
  set of functions $G^{(i,j)}$ for each vertex of the diagram.
   The restrictions for
  these sets are $G^{(i,j)}\subseteq G^{(i-1,j+1)} \cap
  G^{(i,j-1)}$ and the
  conditions on the vertices are slightly dif\/ferent from the one in the previous example. Indeed,
   if a
  vertex $(i,j)$, belongs to the diagram, the two vertices
  $(i-1,j+1)$ and  $(i,j-1)$  must also belong to the diagram. Note again that this
   implies that the vertex $(i-1,j)$ also
  lies in the diagram. For instance
  the diagram,

  \centerline{\includegraphics[width=3cm]{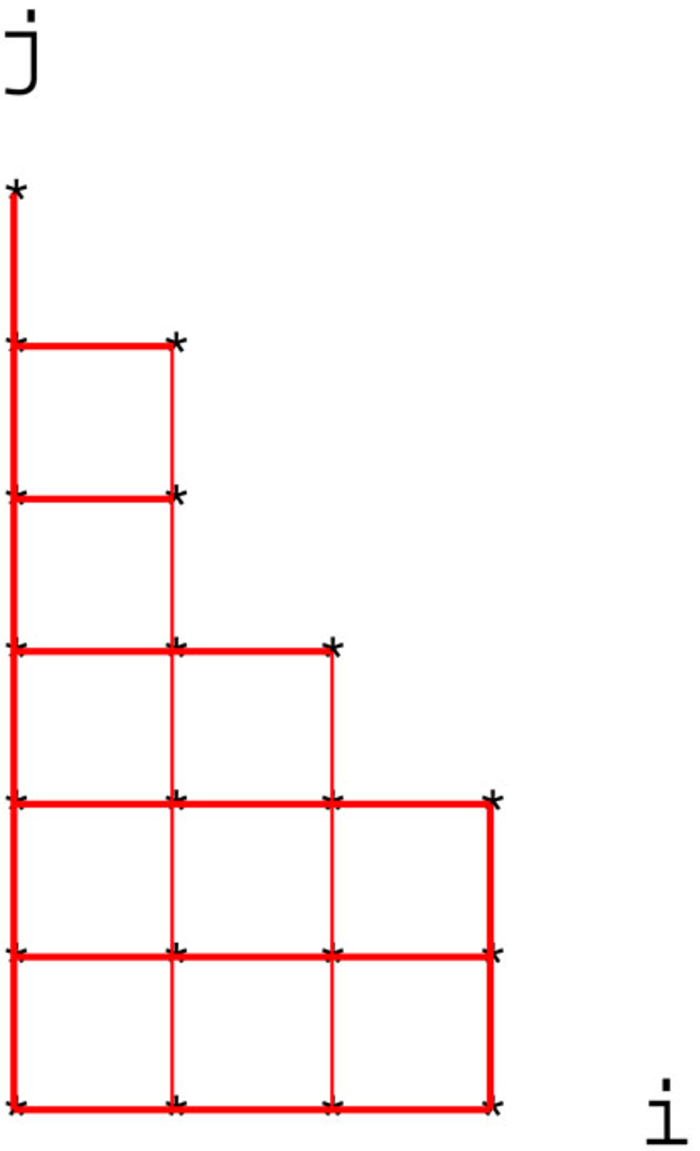}}
together with twenty appropriate sets of functions $G^{(i,j)}$ for
each vertex, would generate
 a $\g$-module for the algebra $\g=\{p,q,xp,yq,yp\}$.

  \item \label{test2} Finally, if the elements $p$, $q$, $xp$, $yq$, $yp$ and $xq$ sit in the Lie algebra under
  consideration, the module will be at most \[ \nn=\bigoplus
  x^iy^jg_k^{i,j}(z), \qquad i+j=0,\dots,n, \qquad k=0,\dots,l_{i,j},
  \]  where the functions $g^{i,j}_k(z)$ belong to $G^{(i+j)}$, a f\/inite set of functions with
  $G^{(l)}\subseteq G^{(l-1)}$. Indeed, consider $h=x^iy^jg^{i,j}(z)$, a generator
   of bi-degree $i+j=c$. Since $xq[h]$ and $yp[h]$ must also lie in $\nn$, all
  the functions $x^ay^bg^{i,j}(z)$ with $a+b=c$ will belong to $\nn$. Hence
  $g^{(i,j)}\in
  G^{(a,b)}$ and, reciprocally,  $g^{(a,b)}\in
  G^{(i,j)}$. Thus for all pairs $(a,b)$ with $a+b=c$,
  the f\/inite sets $G^{(a,b)}$ are identical and it is therefore well def\/ined to pose
  $G^{(a,b)}=G^{(a+b)}.$ Obviously, since $\nn$ is a $xy$-translation module, the following
  inclusions
  hold $G^{(l)}\subseteq G^{(l-1)}$.

For these modules, the possible diagrams are more restricted and
have necessarily the shape of a staircase. Also, instead of
assigning one set of functions to each vertex, such a~set is coupled
to all the vertices having same total degree $i+j$. For instance,
the
 module represented by the diagram

  \centerline{\includegraphics[width=4cm]{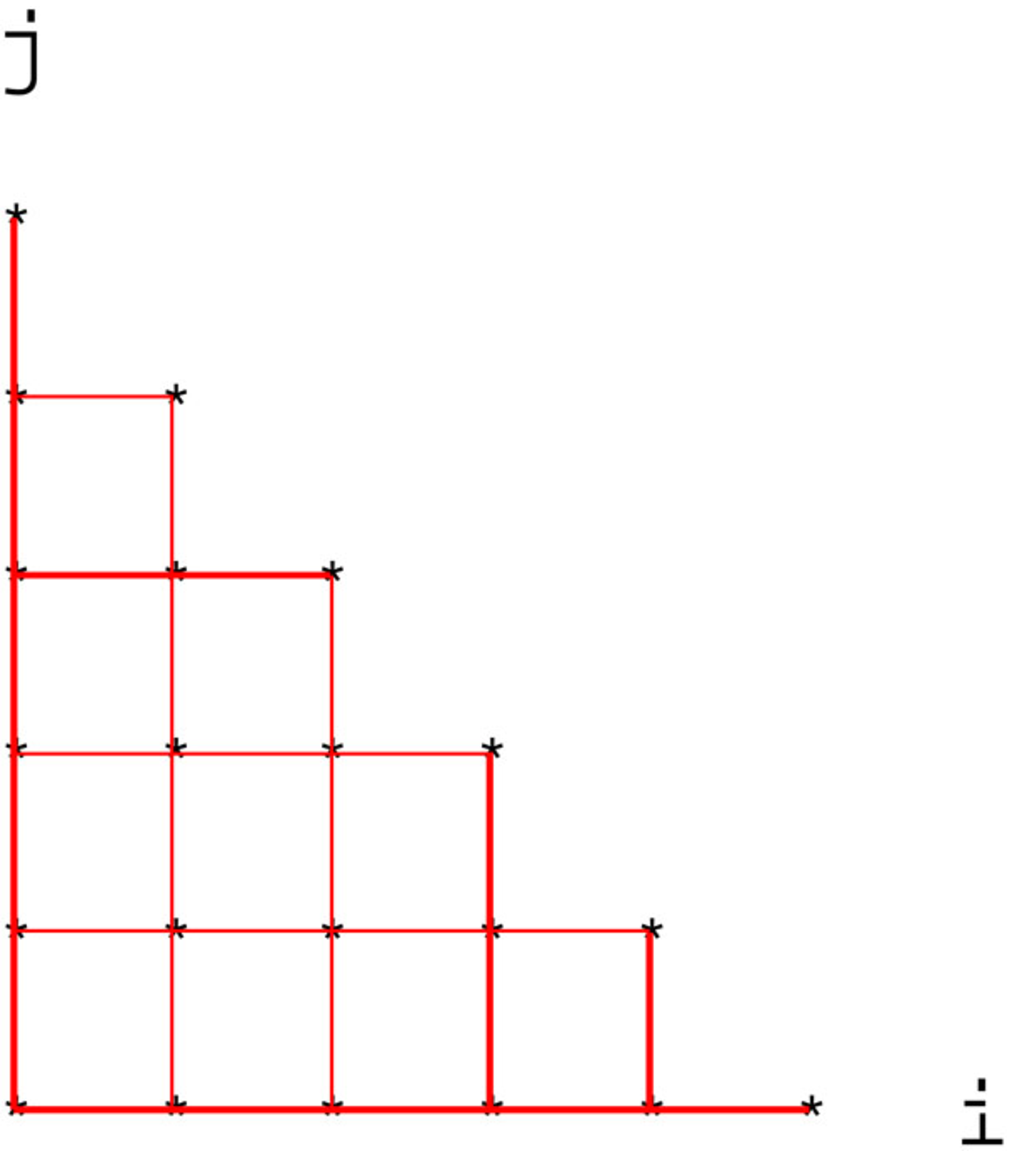}}
will be completely determined after f\/ixing six sets of function in
$z$. Note that this choice must respect the inclusion
$G^{(l)}(z)\subseteq G^{(l-1)}(z)$, for $i=1,\dots,5$.
\end{enumerate}

\begin{table}[hb] \small {\centering
\caption{Cohomology for the type I Lie algebras of  vector
f\/ields, $m=\{1\}$.}

\vspace{2mm}

\begin{tabular}{|l l l|}
  \hline
\tsep{0.5ex}  & Generators  & Cocycles \bsep{0.5ex}\\ \hline \hline
\tsep{0.5ex} 1 &    $\{p,q,xp,yq,xq,yp, x^2p+xyq,xyp+y^2q  \}$ & $xp+K(z), x^2p+xyq+xK(z)$,
\\ & & $yq+K(z), xyp+y^2q+yK(z)$\bsep{0.5ex}\\ \hline
\tsep{0.5ex}2 &    $\{xq, xp-yq,yp,$  &$0$  \\
 & $Z^1(z)p,\dots,Z^l(z)p,Z^1(z)q,\dots,Z^l(z)q   \}$ & \bsep{0.5ex}\\ \hline
\tsep{0.5ex}3 &    $\{xq, xp-yq,yp, xp+yq, $    &$xp+yq+K(z)    $ \\
 & $Z^1(z)p,\dots,Z^l(z)p,Z^1(z)q,\dots,Z^l(z)q$\}&\bsep{0.5ex}\\ \hline
\tsep{0.5ex}4 &    $\{p,q,xp,yq,xq,yp, x^2p+xyq,xyp+y^2q,r  \}$  &$x^2p+xyq+cx$, $xyp+y^2q+cy$  \bsep{0.5ex}\\ \hline
\tsep{0.5ex}5 &    $\{xq, xp-yq,yp, z^ke^{\lambda_lz}p,z^ke^{\lambda_lz}q,r\}$,  &$0$ \\
& $ k\leq  n_l,$ $l=0,\dots,b  $ & \bsep{0.5ex}\\ \hline
\tsep{0.5ex} 6 &    $\{xq, xp-yq,yp, xp+yq, $   &$xp+yq+cz     $ \\
 & $ z^ke^{\lambda_lz}p,z^ke^{\lambda_lz}q,r\}$, $ k\leq  n_l,$ $l=0,\dots,b
 $ &\bsep{0.5ex}\\ \hline
\tsep{0.5ex}7 &    $\{p,q,xp,yq,xq,yp,x^2p+xy, q,xyp+y^2q, r,zr \}$    &$x^2p+xyq+cx $, $xyp+y^2q+cy$ \bsep{0.5ex}\\ \hline
\tsep{0.5ex} 8 &    $\{xq, xp-yq,yp, p,zp,\dots,z^lp,$  &$0$  \\
 & $q,zq,\dots,z^lq, r, zr+a(xp+yq) \}$ & \bsep{0.5ex} \\ \hline
\tsep{0.5ex}9 &    $\{xq, xp-yq,yp, xp+yq, p,zp,\dots,z^lp,$      & $0$ \\
 & $q,zq,\dots,z^lq,r,zr \}$, $k\leq  n_l,$ $l=0,\dots,b  $ & \bsep{0.5ex}\\ \hline
\tsep{0.5ex} 10 &    $ \{p,q,xp,yq,xq,yp,$    & $x^2p+xyq+cx $, $xyp+y^2q+cy$,\\
& $x^2p+xyq,xyp+y^2q, r,zr,z^2r$ \}&  $z^2r+dz$\\ \hline
\tsep{0.5ex} 11 &    $\{xq, xp-yq,yp, p,zp,\dots,z^lp,q,zq,\dots,z^lq, $  & $z^2r+az(xp+yq)+cz$ \\
& $  r, zr+\frac{a}{2}(xp+yq), z^2r+az(xp+yq)\} $&   \bsep{0.5ex}\\
\hline
\tsep{0.5ex} 12 &    $\{xq, xp-yq,yp, xp+yq, p, zp,\dots,z^lp,$      & $z^2r+az(xp+yq)+cz  $  \\
 & $q,zq,\dots,z^lq,r,zr, z^2r+az(xp+yq)  \}$ & \bsep{0.5ex}\\
  \hline \hline
\end{tabular}

}

\vspace{2mm}

Note here that $b$, $l$ and $n_l$ are positive integers, $a$, $c$,
$d$, $k$ and $\lambda_l$ are arbitrary constants and
$Z^1(z),\dots,Z^l(z)$ and $K(z)$ functions of $z$.
\end{table}

\begin{table}[t]\small {\centering

\caption{Cohomology for some type III Lie
algebras of vector f\/ields, $m=\{1\}$.}

\vspace{2mm}

\begin{tabular}{| l l l|}
  \hline
 \tsep{0.5ex} & Generators & Cocycles  \bsep{0.5ex} \\ \hline \hline
\tsep{0.5ex}4A &    $ \{p,yq,q,xq,\dots,x^tq+r,\dots,x^iq+\binom{t}{i}x^{t-i}r,\dots,$  & $0$ \\
 & $x^sq+\binom{t}{s}x^{s-t}r,xp-tzr,yq+zr\}$, $0\leq t \leq s$& \bsep{0.5ex} \\ \hline
\tsep{0.5ex}4C &    $ \{q,xq,\dots,x^sq,p,yq,xp,x^ly^{n-b}r,zr \}$,   & $0$ \\
 & $ 0 \leq b\leq n,$ $l\leq l_0+sb$     & \bsep{0.5ex}\\ \hline
 \tsep{0.5ex}4D &    $ \{q,xq,\dots,x^sq,p,yq,xp,r,zr,z^2r \}$   & $z^2r+cz$ \bsep{0.5ex}\\ \hline
\tsep{0.5ex}5A$^*$ &    $ \{q+r,xq+xr,\dots,x^sq+x^sr,p,$    &$x^2p+sxyq+sr+cx$ \\
 & $xp,yq+zr,x^2p+sxyq+sxzr \}$& \bsep{0.5ex}\\ \hline
\tsep{0.5ex}5C &    $ \{q,xq,\dots,x^sq,p,yq,xp,x^2p+sxyq+(l_0+s\tilde{n})xzr, $ &$x^2p+sxyq+(l_0+s\tilde{n})r+cx$  \\
 &  $x^ly^{\tilde{n}-b}r,zr  \}$,  $ 0 \leq b\leq \tilde{n},$ $l\leq l_0+sb$    & \bsep{0.5ex}\\ \hline
\tsep{0.5ex}5D &    $ \{q,xq,\dots,x^sq,p,yq,xp,x^2+sxyq,r, zr,z^2r\}$    &$x^2p+sxyq+cx$, $z^2r+dz$ \bsep{0.5ex} \\ \hline
\tsep{0.5ex}7C &    $ \{ p,2xp+yq,x^2p+xyq,x^ly^{-n}r,zr \}$, $ 0 \leq l\leq n$   &$x^2p+xyq+cy^2$ \bsep{0.5ex}  \\ \hline
 \tsep{0.5ex}17A$_1$ &    $ \{ p,q,xp+zr,yq, x^2p+(2x+az)zr,y^2q \}$   &$ x^2p+(2x+az)zr+bx+cz,  $  \\
  & & $ y^2q+dy$ \bsep{0.5ex} \\ \hline
 \tsep{0.5ex}17A$_2$ &    $ \{ p,q,xp+azr,yq+zr,x^2p+2axzr,y^2q+2yzr \}$  &$x^2p+2axzr+cx $,  \\
  &  &$y^2q+2yzr+dy $\bsep{0.5ex} \\ \hline
\tsep{0.5ex}17C & $ \{p,q,xp,yq,x^2p+l_0xzr,y^2q+p_0yzr, x^ly^pr\}$, &$x^2p+l_0xzr+cx $,
\\
 &$l\leq l_0,$ $b \leq b_0 $ & $y^2q+b_0yzr+dy$\bsep{0.5ex}\\
 \hline
 \tsep{0.5ex}17D & $
\{p,xp,x^2p,q,yp,y^2q,r,zr,z^2r\}$& $x^2p+ax$, $y^2q+cy$, $z^2r+dz$
\bsep{0.5ex} \\
  \hline
 \hline
\end{tabular}

}

\vspace{2mm}

Note here that $b$, $b_0$, $l$, $l_0$, $\tilde{n}$, $s$ and $t$ are
positive integers, $a$, $c$ and  $d$  are arbitrary constants.
Remark that Amaldi's Lie algebra 5A  is not a Lie algebra. Indeed,
for the space spanned by
$\{q,xq,\dots,x^tq+r,\dots,x^{t+i}q+\binom{t+i}{t}x^{i}r,\dots,x^sq+\binom{s}{t}x^{s-t}r,
p,yp,xp-tzr,yq+zr,x^2p+sxyq+(s-2t)xzr,
x^ly^{n-b}r,zr\} $  to be a Lie algebra,  the parameter $t$ needs to
be zero. We then get the Lie algebra 5A$^*$ given in the table.

\end{table}

\begin{table}[thp]\small {\centering

\caption{Type I quasi-exactly solvable Lie algebras of dif\/ferential operators and
their f\/ixed modules.}

\vspace{2mm}

\begin{tabular}{|l l l|}
  \hline
\tsep{0.6ex} & Quantization condition  & Fixed module \bsep{0.6ex}\\
 \hline \hline
\tsep{0.6ex} 1 &
$x^2p+xyq-nx$, $xyp+y^2q-ny$ &$ \{ x^iy^jg(z)|$ $i+j\leq n, $  $g(z)\in
G^{(i+j)} $\},\\
 & & for $G^{(l)}$ f\/inite with $G^{(l)} \subseteq G^{(l-1)}$
\bsep{0.6ex} \\ \hline
\tsep{0.6ex} 2&  $0$&  $ \{ x^iy^jg(z)|$ $i+j\leq n, $  $g(z)\in  G^{(i+j)}$ \},   \\
& & for $G^{(l)}$ f\/inite with $Z^iG^{(l)} \subseteq G^{(l-1)}$\bsep{0.6ex}
 \\ \hline
\tsep{0.6ex} 3 &  $0$  &$ \{ x^iy^jg(z)|$ $i+j\leq n, $  $g(z)\in G^{(i+j)} $  \},  \\
& & for $ G^{(l)}$ f\/inite, and $Z^iG^{(l)}\subseteq Z^iG^{(l-1)}$ \bsep{0.6ex}\\
\hline
\tsep{0.6ex} 4 &  $x^2p+xyq-nx$, $xyp+y^2q-ny$ &$ \{ x^iy^jz^{k}e^{\lambda_l z}|$  $i+j\leq n, $  $k\leq m_l,$ $ l=0,\dots,p \}  $     \bsep{0.6ex} \\ \hline
\tsep{0.6ex} 5&  $0$ &$ \{ x^iy^jz^{k}e^{\lambda_l z}|$ $i+j\leq n, $  $z^ke^{\lambda_l z}\in G^{(i+j)} \},$  \\
& & for $ G^{(l)}$  a f\/inite $z$-translation module,\\
& & and $Z^iG^{(l)}\subseteq G^{(l-1)}$ for $Z^i=z^ke^{\lambda_lz}p$
\bsep{0.6ex} \\ \hline
\tsep{0.6ex} 6 &  $0$  &$ \{ x^iy^jz^{k}e^{\lambda_l z} |$ $i+j\leq n, $  $k\leq m_l,$ $ l=0,\dots,p  \}$,  \\
& & for $ G^{(l)}$  a f\/inite $z$-translation module,\\
& & and $Z^iG^{(l)}\subseteq G^{(l-1)}$ for $Z^i=z^ke^{\lambda_lz}p$
\bsep{0.6ex} \\ \hline
\tsep{0.6ex} 7 &  $x^2p+xyq-nx$, $xyp+y^2q-ny$ &$ \{ x^iy^jz^k | $ $i+j\leq n, k\leq m    \}$\bsep{0.6ex}   \\
 \hline
\tsep{0.6ex} 8 &  $0$ &$ \{ x^iy^jz^k |$ $i+j\leq n, $   $l(i+j)+k\leq m    \}$
\bsep{0.6ex}\\ \hline
\tsep{0.6ex} 9 &  $0$  &$ \{ x^iy^jz^k |$ $i+j \leq n, $ $l(i+j)+k\leq
m $\}  \bsep{0.6ex}\\ \hline
\tsep{0.6ex}10 &  $x^2p+xyq-nx$, $xyp+y^2q-ny$,  &$ \{ x^iy^jz^k |$  $i+j\leq n, k\leq m   \} $   \\
& $z^2r-mz$&  \bsep{0.6ex}\\ \hline
\tsep{0.6ex}11 &  $z^2r+az(xp+yq)-mz$&$ \{ x^iy^jz^k |$ $i+j\leq n, $  $
a(i+j)+k\leq m    \}$   \bsep{0.6ex}\\ \hline
\tsep{0.6ex}12 &  $z^2r+az(xp+yq)-mz $  &$ \{
x^iy^jz^k |$ $i+j\leq n ,$ $ a(i+j)+k\leq m\}$    \bsep{0.6ex}\\
  \hline \hline
\end{tabular}

}

\vspace{2mm}

Note here that $m$ and $n$ are positive integers.

\bigskip

\small {\centering

\caption{Some of the type III quasi-exactly solvable Lie algebras of dif\/ferential
operators
and their f\/ixed modules.}

\vspace{2mm}

\begin{tabular}{|l l l|}
  \hline
\tsep{0.6ex} & Quantization condition  & Fixed module \bsep{0.6ex}\\   \hline \hline
\tsep{0.6ex}4A & $0$
& $ \{
x^iy^jz^k | \  i+sj+(s-t)k\leq n, j\leq m_y, k \leq m_z\}    $ \bsep{0.6ex}   \\
\hline
\tsep{0.6ex}4C &    $0$ &  $ \{ x^iy^jz^k | \  i+sj+(l_0+sn)k\leq n, k\leq b, j\leq b_k \}$  \\
& &   with   $b_{k-1}\geq b_k+n$    \bsep{0.6ex} \\
\hline
 \tsep{0.6ex}4D &    $z^2r-mz$   & \{$x^iy^jz^k | \  i+sj \leq n, k\leq m  \} $ \bsep{0.6ex}\\
 \hline
 \tsep{0.6ex}5A$^*$ &    $x^2p+sxyq+sr-nx$ &  $ \{ x^iy^jz^k | \
i+s(j+k)\leq n, j\leq m_y, k\leq m_z\}$     \bsep{0.6ex}\\ \hline
\tsep{0.6ex}5C &    $x^2p+sxyq+(l_0+s\tilde{n})xzr-nx$ &  $ \{ x^iy^jz^k | \  i+sj+(l_0+s\tilde{n})k\leq n,k\leq b, j\leq b_k \}$  \\
& &   with   $b_{k-1}\geq b_k+\tilde{n}$     \bsep{0.6ex}\\
\hline
\tsep{0.6ex}5D &    $x^2p+sxyq-nx$, $z^2-mz$&  $ \{ x^iy^jz^k | \  i+sj\leq n, k\leq m\}$   \bsep{0.6ex}\\  \hline
\tsep{0.6ex}7C &   $0$&   0\bsep{0.6ex}\\
\hline
 \tsep{0.6ex}17A$_1$ &   $ x^2p+(2x+az)zr-m_xx$, $y^2q-m_yy$&   $ \{ x^iy^j| \  i \leq m_x, j\leq m_y \}$   \bsep{0.6ex}\\ \hline
 \tsep{0.6ex}17A$_2$ &   $x^2p+2axzr-m_xx $, $ y^2q+2yzr-m_yy $&   $ \{ x^iy^jz^k| \  i+2ak \leq m_x,j+2k\leq m_y, k \leq m_z\}$   \bsep{0.6ex}\\ \hline
 \tsep{0.6ex}17C &   $x^2p+l_0xzr-m_xx $, $y^2q+b_0yzr-m_yy$&   $ \{ x^iy^jz^k| \  i+l_0k \leq m_x,j+b_0k\leq m_y, k\leq m_z \} $  \bsep{0.6ex}\\ \hline
  \tsep{0.6ex}17D &   $x^2p-m_xx$, $y^2q-m_yy $, $z^2r-m_zz$&   $ \{ x^iy^jz^k| \  i \leq m_x,$$j\leq m_y, k\leq m_z\} $   \bsep{0.6ex} \\ \hline
 \hline
\end{tabular}

}

\vspace{2mm}

Note here that $m$, $n$, $m_x$, $m_y$ and $m_z$ are positive
integers.

\end{table}

This set of principles is of great help in the determination of the
possible $\g$-modules $\nn$ for each Lie algebras of f\/irst order
dif\/ferential operators $\g$ described in Tables $1$ and $2$.
Depending on the elements contained in the Lie algebra studied, we
started our search of $\g$-module based on the general module given
in this guideline. Once again, the computations are tedious and it
would not be relevant to detail each of them.
 We will concentrate on the same Lie algebras as in the previous
 step of this classif\/ication problem,
  that is the type~I Lie algebras cases~$1$ and~$10$.

{\bf Type I, case 1.}
Since the Lie algebra contains the dif\/ferential operators $p$, $q$,
$xp$, and  $yp$, from the principle (\ref{test}), the most general
module $\nn$ will be spanned by functions of the form
$h=x^iy^jg^{i,j}(z)$, where $g^{i,j}(z)$ belongs to $G^{(i,j)}$. We
now consider the operator $T=x^2p+xyq+3xK(z)$ in the algebra $\g$
and its action on $h=x^ny^ag^{n,a}(z)$ a generator of $\nn$ with
maximal exponent in~$x$. Thus
\begin{gather*}T[h]=n
x^{n+1}y^ag^{n,a}(z)+ax^{n+1}y^ag^{n,a}(z)+3K(z)
x^{n+1}y^ag^{n,a}(z)\\
\phantom{T[h]}{} =(n+a+3K(z))x^{n+1}y^ag^{n,a}(z).
\end{gather*}
Since the
exponent in $x$ was taken to be maximal, this imposes that $K(z)$ is
indeed a constant~$K$ equal to $-\frac{n+a}{3}$. Symmetrically, by
considering $Q=xyp+y^2q+3yK(z)$ and $h=x^by^mg^{b,m}(z)$, a function
with maximal exponent in $y$, the following equality holds
$K=-\frac{b+m}{3}$. For this to be possible, we necessarily have
 $n+a=b+m$. Consequently, the dif\/ferential operators $xq$ and
$yp$ belong to the Lie algebra $\g$ and the  module $\nn$ is given
by the principle (\ref{test2}). Also note that the operators $xp$
and $yq$ force both $a$ and $b$ to be zero. Otherwise
$x^{n+1}y^{a-1}g^{n,a}(z)$ and $x^{b-1}y^{m+1}g^{b,m}(z)$ would be
in $\nn$, contradicting the maximality of $n$ and $m$.
 Thus $3K(z)=-n$ and the module
 \begin{equation}\label{module}
\nn=\{x^iy^jg^{i,j}(z) \ | \ i+j\leq n, \ g^{i,j}(z)\in G^{(i+j)}\},
\qquad \textrm{where}\quad  G^{(l)}\subseteq G^{(l-1)}
\end{equation}  is
f\/ixed by all the dif\/ferential operators in $\g$.
 Therefore it is possible conclude that the Lie algebra
\[ \g={\rm Span} \{p,q,xp,yq,xq,yp,x^2p+xyq-nx,xyp+y^2q-ny,1 \},\]
is quasi-exactly solvable with respect to the f\/inite dimensional $\g$-module~$\nn$.

{\bf Type I, case 10.}
Since the case $10$ Lie algebra  contains the case~$1$ Lie algebra,
its module~$\nn$ will be at most the module given in~(\ref{module}).
Observe f\/irst that the constant $c$ in the case $10$ Lie algebra
 has to be the negative integer $-n$. Furthermore, the
operator $r$ imposes  $\nn$ to be a $z$-translation module and the
operator $zr$ forces $G^{(l)}$ to be generated by monomials. Then,
for $z^m$ a monomial of maximal degree in $G^{(l)}$, the function
$h=x^iy^{l-i}z^m$ belongs to $\nn$. Since $z^2r+dz$ belongs to the
Lie algebra,
\begin{gather*}z^2r+dz[h] =mx^iy^{l-i}z^{m+1}+dx^iy^{l-i}z^{m+1}
=[m+d]x^{i}y^{l-i}z^{m+1},
\end{gather*} should belong to the
$\g$-module $\nn$. Thus, from the maximality of the degree in $z$,
 the constant $d$ has to be the negative integer
$-m$. Since the argument must hold for every set $G^{(l)}$, they
will all share the same monomial of maximal degree $m$. We can
therefore conclude that the Lie algebra
\[ \g={\rm Span} \{p,q,xp,yq,xq,yp,x^2p+xyq-nx,xyp+y^2q-ny,r,zr,z^2r-mz,1 \},\]
is quasi-exactly solvable with respect to the module
\[
\nn=\{x^iy^jz^k \ | \ i+j\leq n, k\leq m\}.\]

To summarize, a partial classif\/ication of quasi-exactly solvable Lie algebras of f\/irst
order dif\/ferential operators was accomplished in this section and
the description of these Lie algebras~$\g$, along with their
$\g$-modules, can be found in Tables $1$--$4$. In principle, it would
be possible to achieve a complete classif\/ication using similar
arguments.  However this gigantic work would require a colossal
amount of time. Nevertheless this partial classif\/ication is a good
starting point for seeking new quasi-exactly solvable Schr\"odinger operators in three
dimensions. In that scope, the next section is devoted to the
description of few new quasi-exactly solvable Schr\"odinger operators.

\section{New quasi-exactly solvable  Schr\"odinger operators\\ in three dimensions}

Recall that in the general classif\/ication problem, once the quasi-exactly solvable Lie
algebras of dif\/ferential operators $\g$ are determined, the next
step is to construct second order dif\/ferential operators $\hh$ that
are locally equivalent to Schr\"odinger operators. Given $\g$, one of
the Lie algebras of f\/irst order dif\/ferential operators obtained in
the previous section, we obtain a second order dif\/ferential operator
$\hh$ by letting
\begin{equation} \mathcal{H}=\sum_{a,b=1}^mC_{ab}T^aT^b+\sum_{a=1}^mC_aT^a+C_0,
 \qquad \textrm{where} \quad T^a\in \g,\end{equation}
 as illustrated previously. Then, one has to choose the coef\/f\/icients
 $C_{ab},$ $C_a$, $C_0$ in such a way that the closure conditions $d \omega=0$
 are satisf\/ied.
 Given $T^a=\xi^{ai}\partial_i+\eta^a$, the closure
conditions are the Frobenius compatibility conditions for the
overdetermined system
\[ \sum^m_{a,b=1}\xi^{ai}\left[
C_{ab}\left(\sum_{j=1}^n(\xi^{bj}\frac{\partial \alpha}{\partial
x^j}+\frac{\partial \xi^{bj}}{\partial
x^j})-2\eta^b\right)-C_a\right]=0,
\]
where $\alpha=\lambda+\frac{1}{2}\ln(g)$ and $\mu=e^\lambda$ is the
gauge factor. Finally, we need to bear in mind that the last step in
the classif\/ication is to verify that the operators are normalizable,
i.e.\ the functions in $\widetilde{\nn}$, the module obtained
after the gauge transformation, need to be square integrable. These
operators will therefore
have the property that part of their spectrum can be explicitly computed.

We have now in hand a large variety of generating quasi-exactly solvable Lie algebras
$\g$. The door is therefore wide open to the construction of
numerous new quasi-exactly solvable Schr\"odinger operators in three dimensions. However
the Schr\"odinger operators described in this paper are built only
from two of these new quasi-exactly solvable Lie algebras: the  type~III cases~17D and case~ 5A$^*$. The reader can therefore see that many more examples can be
constructed using this method together with the results of the
previous section.

\subsection[Type III, case 17D, ...]{Type III, case 17D, $\boldsymbol{(\mathfrak{sl(2)}\times \mathfrak{sl(2)}\times \mathfrak{sl(2)})}$}

The f\/irst two families of normalizable quasi-exactly solvable Schr\"odinger operators
displayed in this section are similar to the operator given in the
example (\ref{ex}). However, these two examples are more general.
Indeed, for these two operators, the type III  case 17D quasi-exactly solvable
Lie algebra $\g$ is spanned by the f\/irst order dif\/ferential
operators
\[
     p, \quad xp, \quad x^2p-m_xx, \quad q, \quad yq, \quad y^2q-m_yy, \quad r,  \quad zr, \quad
     z^2r-m_zz,
\] where $m_x$, $ m_y$ and $m_z$ are non negative integers and the module $\nn_{m_xm_ym_z}$ is generated by the $(m_x+1)(m_y+1)(m_z+1)$ monomials
\[ x^iy^jz^k \qquad \textrm{where} \quad 0\leq i \leq m_x, \quad j\leq m_y \quad \textrm{and}\quad  k\leq m_z.
\]

\subsubsection{First example}

 The f\/irst family of operators is constructed with the
following choice of coef\/f\/icients
\begin{gather*}
C_{ab}=\left(
  \begin{array}{ccccccccc}
    A & 0 & 1 & 0 & 0 &0& 0 & 0 & 1 \\
    0& B & 0 & 0 &0 & 0 & 0 & 1 & 0 \\
1  & 0 & C & 0 &0 & 0 & 1 &0   & 0 \\
    0 & 0 &0 & 2A & 0 & 0 & 0& 0 & 0 \\
    0 & 0 &0& 0 & 2B & 0 &0& 0 & 0 \\
    0 & 0 &0 & 0 & 0 & 2C & 0 & 0 & 0 \\
    0 &0 & 1 &0& 0 & 0 &A& 0 & 1 \\
0& 1 & 0 & 0 & 0 & 0 &0& B & 0 \\
    1 & 0 & 0 & 0 & 0 & 0 & 1 & 0 & C \\
  \end{array}
\right),
\\  C_c= [0, 0, 0, -2 Am_x, -2Bm_y, -2Cm_z, 0, 0, 0] \qquad \textrm{and} \qquad C_0 = -Am_x - Bm_y -
Cm_z.
\end{gather*}
 From the order two terms of these second order dif\/ferential
operators, the induced contravariant metric $g^{(ij)}$ is obtained
and reads as
\begin{equation}\label{metric2}
\left(
  \begin{array}{ccc}
    A(x^2+1)^2 & 0 & (x^2+1)(z^2+1) \\
    0 & By^4+2(B+1)y^2+B & 0 \\
    (x^2+1)(z^2+1) & 0 & C(z^2+1)
  \end{array}
\right). \end{equation} The determinant of the matrix is
$\widetilde{g}=(1-AC)(x^2+1)^2(By^4+2(B+1)y^2+B)(z^2+1)^2$ and one
easily verif\/ies, for $A$, $B$ and $C$  positive and  $AB>1$, that
the matrix is positive def\/inite on~$ \mathbb{R}^3$. The operator can
therefore be written as
 \[-2 \hh=\Delta+\vec{V}+U,\]
 where $\Delta$ is the
Laplace--Beltrami operator related to the metric (\ref{metric2}) and
where
\[ \vec{V}=-2(x^2+1)(Am_xx+m_zz) p-2m_y(By^3+By+y)q-2(z^2+1)(Cm_zz
+m_xx)r.\] From a direct computation, the closure conditions are
verif\/ied and the gauge factor
 required to gauge transform $\hh$ into a Schr\"odinger operator $\hh_0$ is given by
\[\mu
=(x^2+1)^{\frac{-m_x}{2}}(By^4+2(B+1)y^2+B)^{\frac{-m_y}{4}}(z^2+1)^{\frac{-m_z}{2}}.
\]
Once the transformation is performed, the equivalent operator reads
as
\[-2\hh_0=\Delta+U, \]
where the potential of the Schr\"odinger operator  $-\frac{U}{2}$ is
given by a
rational function in $y$.

 Note that the same three factors arise in both $\mu$ and $\widetilde{g}$. This
will simplify our computations while testing the square
integrability of the functions in  $\widetilde{\nn}$. Indeed, a
function in $\widetilde{\nn}$ is given by $h=\mu x^{i}y^{j}z^{k}$
where the exponents $i$, $j$ and $k$ are smaller or equal to $m_x$,
$m_y$ and $m_z$ respectively. Our aim is to show that the triple
integral
\[ \iiint_{\mathbb{R}^3}(\mu x^{i}y^{j}z^{k})^2\sqrt{g} dxdydz\]
is f\/inite. Obviously, it is suf\/f\/icient to show the convergence of
this integral for the monomials of maximal exponent.  We can
therefore focus on
\[\iiint_{\mathbb{R}^3}\frac{x^{2m_x}y^{2m_y}z^{2m_z}}
{(x^2+1)^{m_x+1}(By^4+2(B+1)y^2+B)^{\frac{m_y}{2}+\frac{1}{2}}
(z^2+1)^{m_z+1}} dxdydz.\]
Using Fubini's theorem, this triple
integral can be factored into the product of three integrals
\begin{gather*}
 \int_{-\infty}^{\infty}\frac{x^{2m_x}}{(x^2+1)^{m_x+1}}dx, \qquad \int_{-\infty}^{\infty}\frac{y^{2m_y}}
{(By^4+2(B+1)y^2+B)^{\frac{m_y}{2}+\frac{1}{2}}}dy, \qquad \textrm{and}\\
\int_{-\infty}^{\infty}\frac{z^{2m_z}}{(z^2+1)^{m_z+1}}dz ,
\end{gather*}
 each of which is easily shown to be convergent. We
therefore have in hand a normalizable quasi-exactly solvable Schr\"odinger operator
and it is feasible to determine explicitly part of its spectrum.

For instance, if we f\/ix $m_x=0$, $m_y=2$ and $m_z=1$, few
manipulations lead to the six eigenfunctions of the operator $\hh$
restricted to $\nn$. Indeed, with this choice of parameters, the
$\g$-module is
\[ \nn=\{1, y, y^2, z, yz, y^2z\},\]
and the transformation matrix to be diagonalized reads as
\[\left(
  \begin{array}{cccccc}
    -2\ \ & 0 & 2 & B & 0 & 0 \\
    0 & -4-2B & 0 & 0 & 0 & 0\\
    2B & 0 & -2\ \ & 0 & 0 & 0 \\
    0 & 0 & 0 &-2\ \ & 0 & 2B \\
    0 & 0 & 0 & 0 & -4-2B & 0 \\
    0 & 0 & 0 & 2B & 0 & -2
  \end{array}
\right).\] Once the diagonalization is performed,   three dif\/ferent
eigenvalues $\lambda_1=-4-2B $, $\lambda_2=-2-2B $, and
$\lambda_3=-2+2B $ are obtained,  each of them having multiplicity
two. The six eigenfunctions are respectively
\begin{gather*}
\psi_{1,1}=y, \qquad  \psi_{1,2}=yz,\qquad
\psi_{2,1}=1+y^2, \qquad
 \psi_{2,2}=-1+y^2,\\  \psi_{3,1}=-z+y^2z, \qquad
 \psi_{3,2}=z+y^2z.
\end{gather*}
 Consequently, we obtain three multiplicity two
 eigenvalues of the Schr\"odinger operator $\hh_0$: $\widetilde{\lambda_1}=1-B $, $\widetilde{\lambda_2}=1+B $
 and $\widetilde{\lambda_3}=2+B $,
 and the six scaled eigenfunctions are
\begin{gather*}
\widetilde{\psi_{1,1}}=\mu y, \qquad  \widetilde{\psi_{1,2}}=\mu yz,\qquad
 \widetilde{\psi_{2,1}}=\mu (1+y^2), \qquad
 \widetilde{\psi_{2,2}}=\mu (-1+y^2),\\
   \widetilde{\psi_{3,1}}=\mu (-z+y^2z), \qquad
 \widetilde{\psi_{3,2}}=\mu (z+y^2z).
\end{gather*}

As mentioned previously, the metric (\ref{metric2}) is positive
def\/inite on $\mathbb{R}^3$, hence Riemannian, and one can compute
that the Riemann curvature tensor is zero everywhere. The change of
variables that leads to a Cartesian coordinate system is given by
\[X=\arctan{x}, \qquad Y=\int{\frac{1}{\sqrt{By^4+2(B+1)y^2+B}}},\qquad
Z=\arctan{z},\] where we have some f\/lexibility on $B$ to adjust the
roots of the elliptic integral. Note that this operator is a good
illustration of the modif\/ied Turbiner's conjecture in three
dimensions proved in \cite{Moi}. Indeed, the generating Lie algebra
$\g$ is imprimitive and the leaves of the foliation are surfaces,
the Riemann curvature tensor is zero and, as expected, the potential
is separable since it depends on only one variable.

\subsubsection{Second example}
With the same representation of Lie algebra by f\/irst order
dif\/ferential operators but a dif\/ferent choice of coef\/f\/icients, one
constructs another family of second order dif\/ferential operators
$\hh$. Indeed, with
\begin{gather*}  C_{ab}=\left(
                     \begin{array}{ccccccccc}
                       A& 0& 1& 0& 0& 0& 0& 0 & \lambda \\
                       0& D& 0& 0& 0& 0& 0 & \beta & 0 \\
                       1& 0& B& 0& 0& 0& 1& 0& 0 \\
                       0& 0& 0& 2 A& 0& 0& 0& 0& 0 \\
                       0& 0& 0& 0& \beta D + C& 0& 0& 0& 0 \\
                       0& 0& 0& 0& 0& 2 \lambda B& 0& 0& 0 \\
                       0& 0& 1& 0& 0& 0& A& 0& \lambda\\
                       0& \beta& 0& 0& 0& 0& 0& \beta C& 0\\
                       \lambda& 0& 0& 0& 0& 0& \lambda& 0& \lambda^2
                       B
                     \end{array}
                   \right),\\
 C_c=[0, 0, 0, -2Am_x, -\beta D (1+2m_y)+C,
-2\lambda Bm_z, 0, 0, 0 ]  \qquad \textrm{and}\\
C_0=-Am_x-Bm_y-Cm_z,
\end{gather*} a family of operators $\hh$  is obtained and
one easily verif\/ies that all these operators are equivalent to
Schr\"odinger operators $\hh_0$. Note that this family of operators
is slightly more general than the family obtained in the f\/irst
example. However, some of the details are lengthy and are omitted
for brevity sake. The induced contravariant metric $g^{(ij)}$ is
given by:
\begin{equation}\label{metric3}
\left(
  \begin{array}{ccc}
    A(x^2+1)^2 & 0 & (x^2+1)(\lambda z^2+1)\\
    0 &  \beta C y^4+y^2(2 \beta+\beta D+C)+D & 0 \\
    (x^2+1)(\lambda z^2+1) & 0 & B(\lambda z^2+z))
  \end{array}
\right),
\end{equation}
and it is positive def\/inite on $\mathbb{R}^3$ provided $A$, $B$,
$C$, $D$ and $\beta$ are positive and $AB>1$. Its determinant is
$\widetilde{g}=(AB-1)(x^2+1)^2(\beta Cy^4+2\beta y^2+\beta
Dy^2+y^2C+D)(\lambda z^2+1)^2$,
 and the gauge factor required is the product of three
 functions in $x$, $y$ and $z$ respectively. After the
gauge transformation the new potential is again a rational function
involving only the $y$ variable and the Riemann curvature tensor is
null again. The following change of variables leads to Cartesian
coordinates
\begin{gather*}
X=\arctan{x}, \qquad Y=\int{\frac{1}{\sqrt{\beta C y^4+2 \beta y^2+2 \beta D y^2+y^2c+D}}dy}, \\ Z=\frac{\arctan{\sqrt{\lambda}z}}{\sqrt{\lambda}},
\end{gather*}
where we have some f\/lexibility on $\beta$, $C$ and $D$ to adjust
the roots of the elliptic integral Y.

However, we do not know if these operators are all normalizable.
But, if we f\/ix $C=\beta D$, the gauge transformation simplif\/ies and
becomes, once again, very similar to the determinant of the metric
(\ref{metric3}). Indeed
\[\mu=(x^2+1)^{-\frac{m_x}{2}}(\beta^2Dy^4+2\beta y^2(1+D)+D)^{-\frac{m_y}{4}}(\lambda z^2+1)^{-\frac{m_z}{2}}\]
and one verif\/ies, the exact same way as in the previous example,
that the functions in $\widetilde{\nn}$ are square integrable.
Therefore, for any choice of integers $m_x$, $m_y$, and $m_z$, one
would obtain $(m_x+1)(m_y+1)(m_z+1)$ eigenfunctions in the spectrum
of the Schr\"odinger operator $\hh_0$.

For instance, if we f\/ix $\lambda=1$, $\beta=5$  and the three
parameters $m_x$, $m_y$ and $m_z$ to be $1$, one gets two
 eigenvalues, $-3$ and $-7$ of multiplicity four, and the following eight
eigenfunctions
\begin{gather*} \widetilde{\psi_{-7,1}}= \mu (-1 + x z), \qquad \widetilde{\psi_{-7,2}}=\mu (y - x y z),\\ \widetilde{\psi_{-7,3}}=\mu (x y + y z),\qquad \widetilde{\psi_{-7,4}}=\mu (x + z),\\
\widetilde{ \psi_{-3,1}}= \mu (y + x y z),\qquad \widetilde{\psi_{-3,2}}= \mu (-x + z),\\  \widetilde{\psi_{-3,3}}=\mu (-x y + y z),\qquad \widetilde{\psi_{-3,4}}=\mu (1 + x z).
\end{gather*}
Note that the nodal surfaces can described easily in this coordinate
system. Indeed, since $\mu$ is always positive, the nodal surfaces
are simply the zero loci of polynomials. For these eight
eigenfunctions, the surfaces are given by the zeros of degree two
factorizable polynomials and one easily gets the following pictures.

\centerline{\includegraphics[width=45mm]{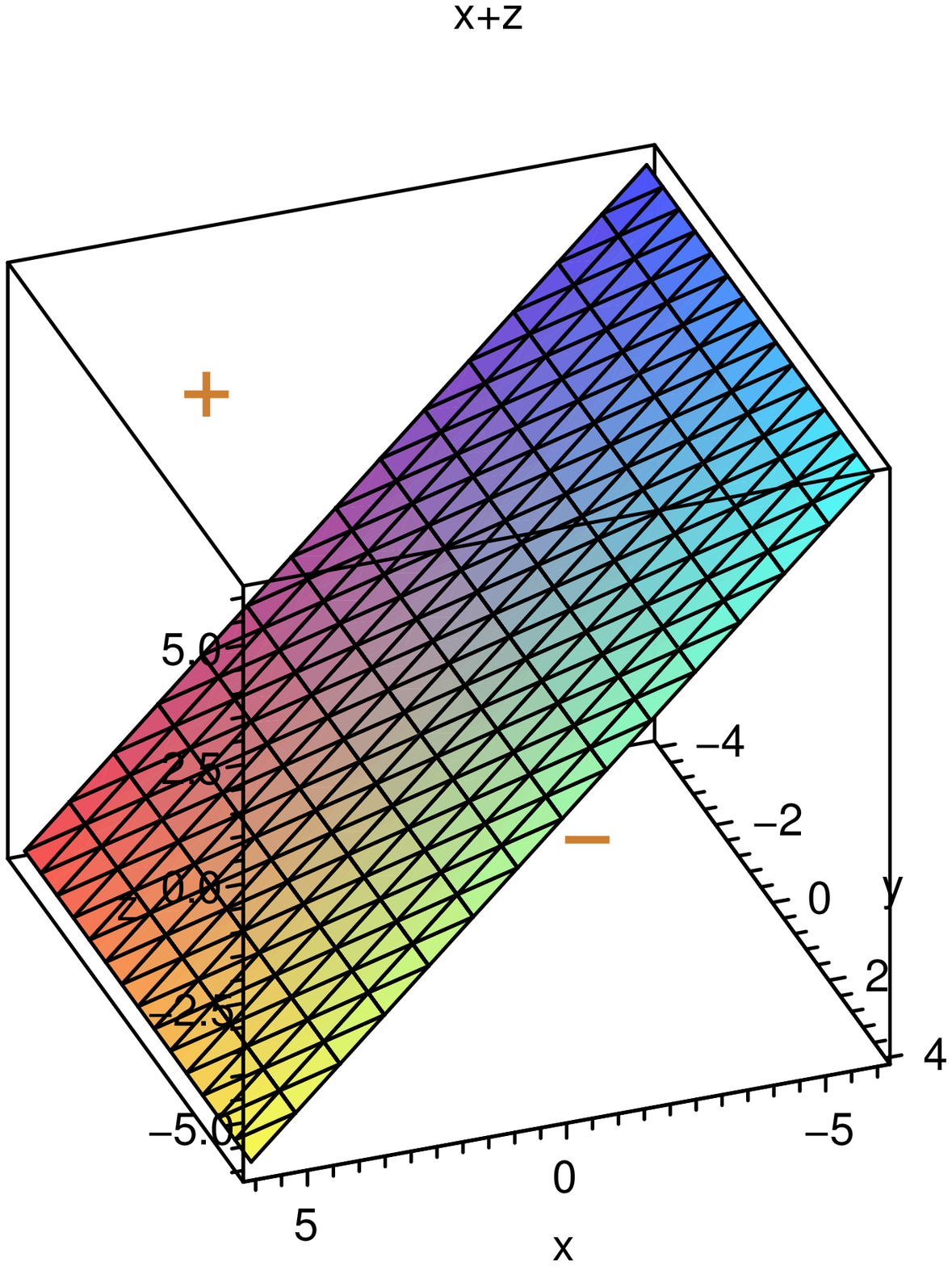} \qquad \qquad \includegraphics[width=45mm]{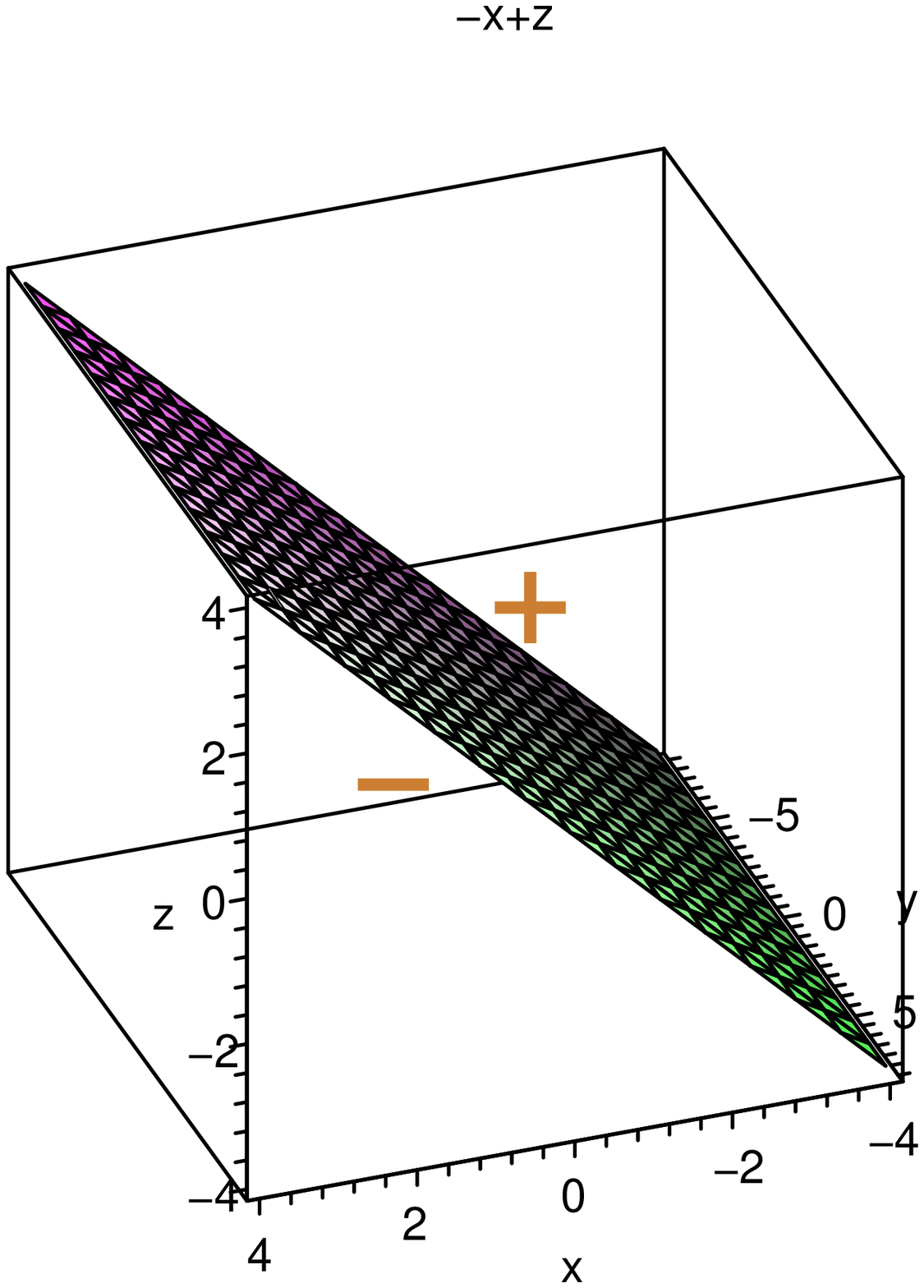}}

\bigskip

\centerline{\includegraphics[width=45mm]{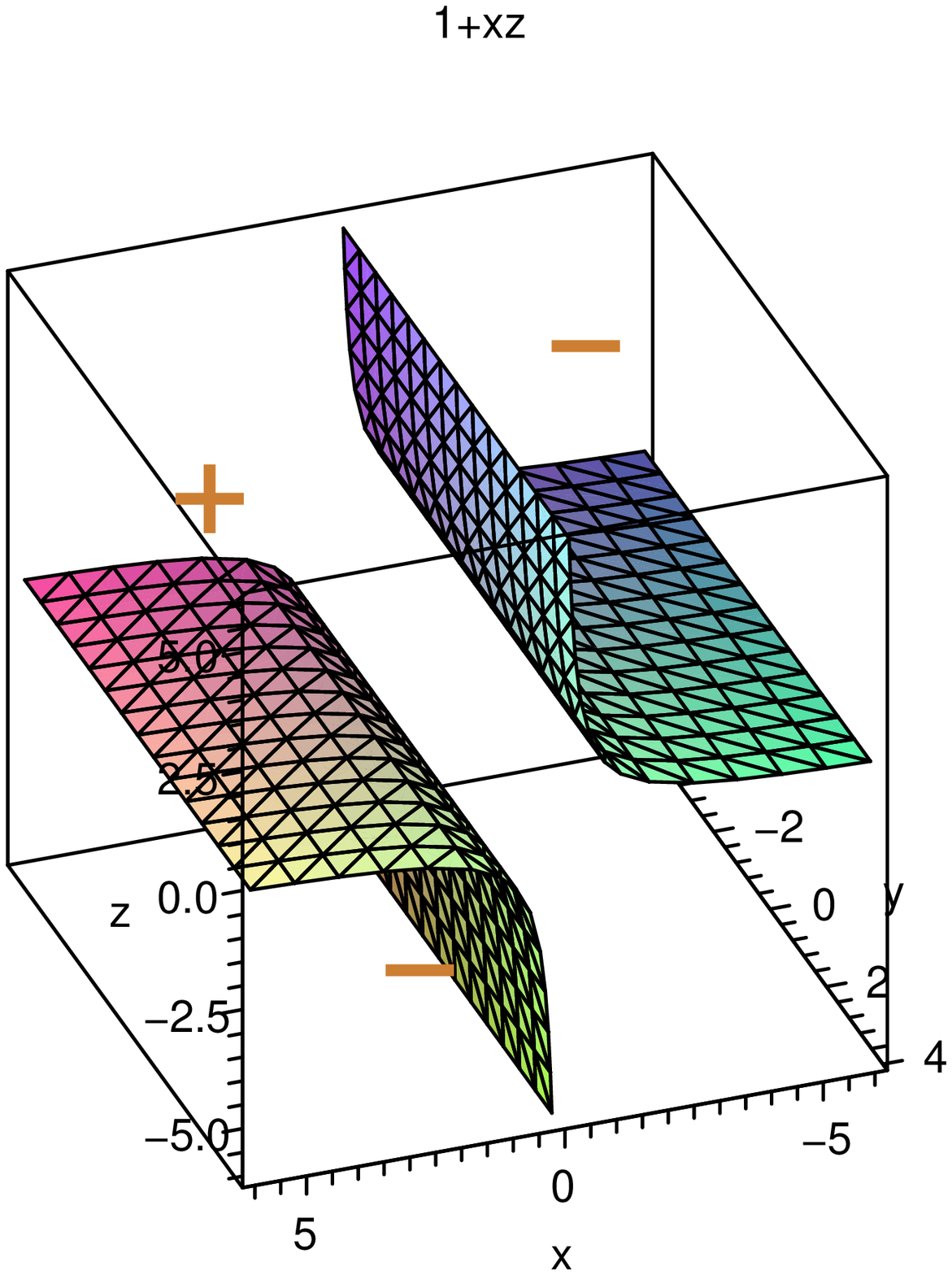} \qquad \qquad \includegraphics[width=45mm]{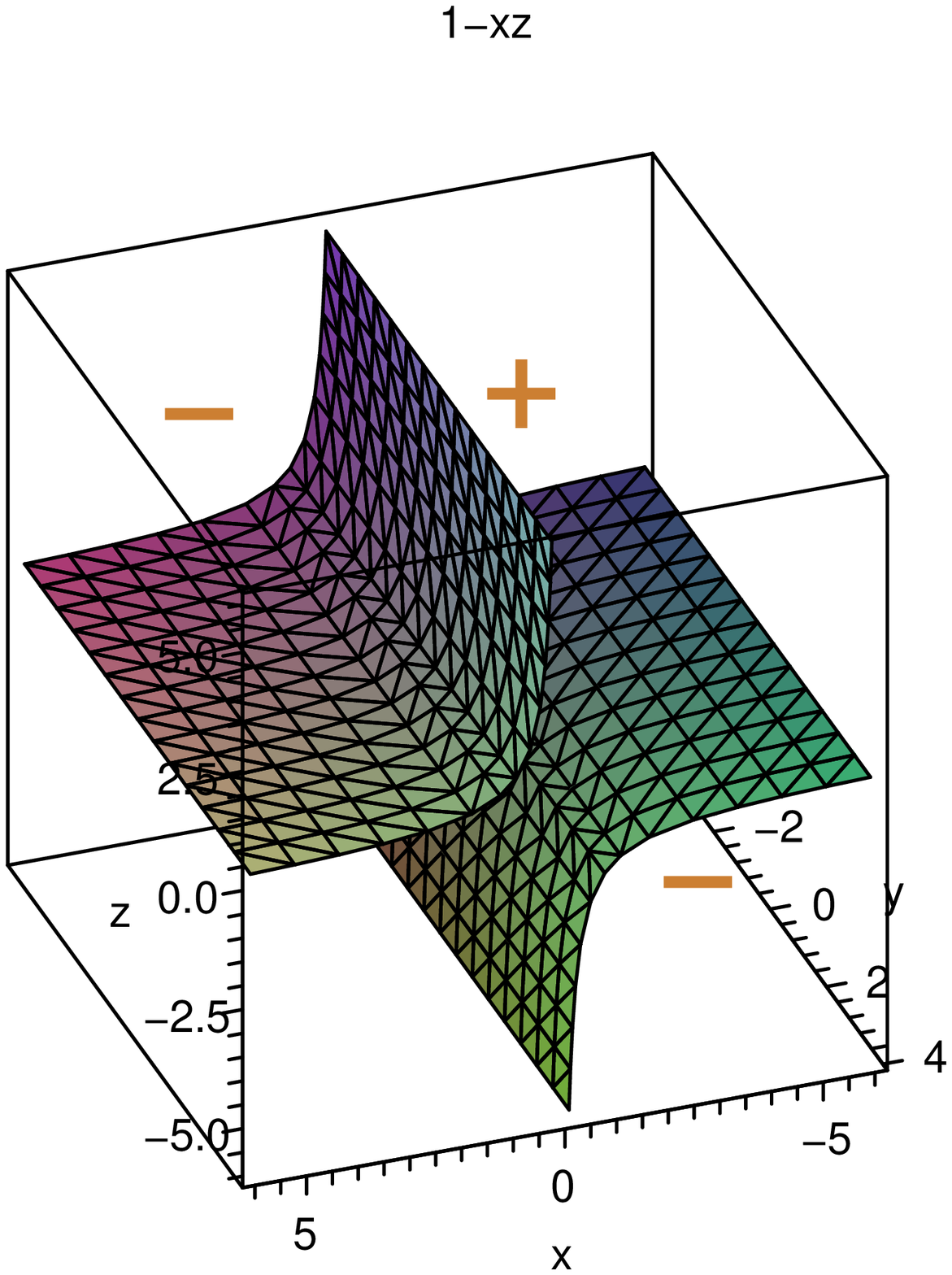}}

\bigskip

\centerline{\includegraphics[width=45mm]{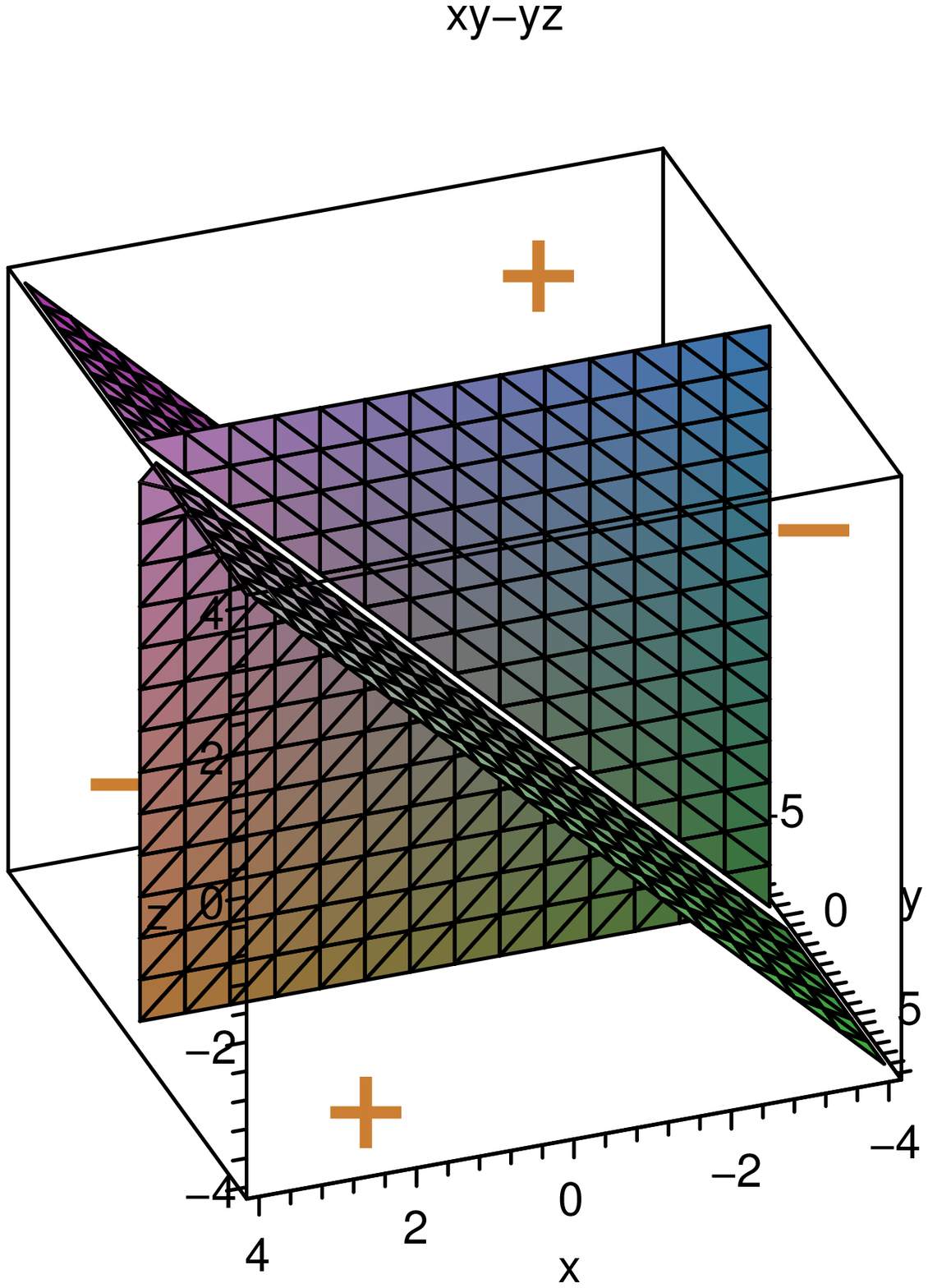} \qquad \qquad \includegraphics[width=45mm]{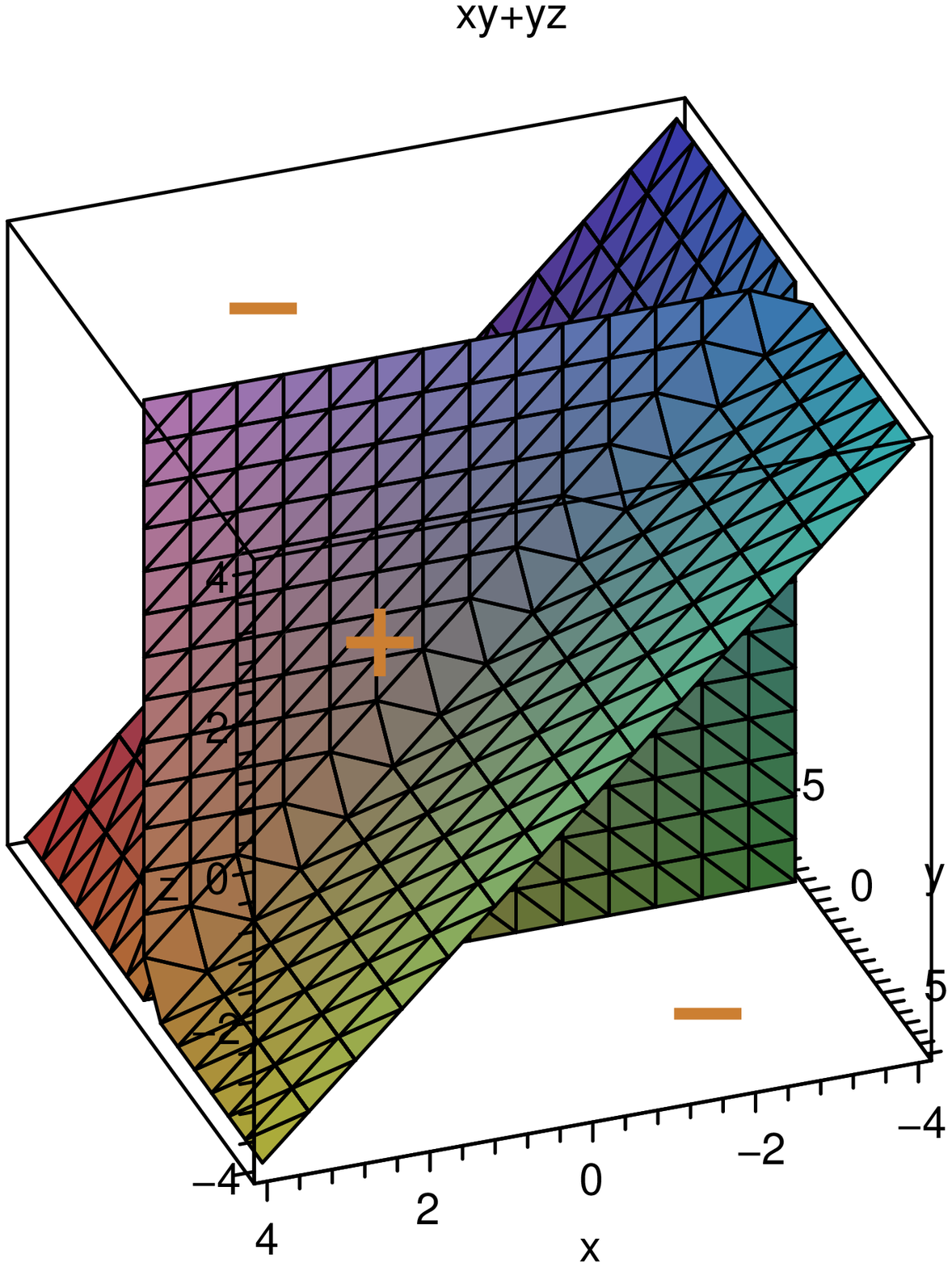}}

\bigskip

\centerline{\includegraphics[width=45mm]{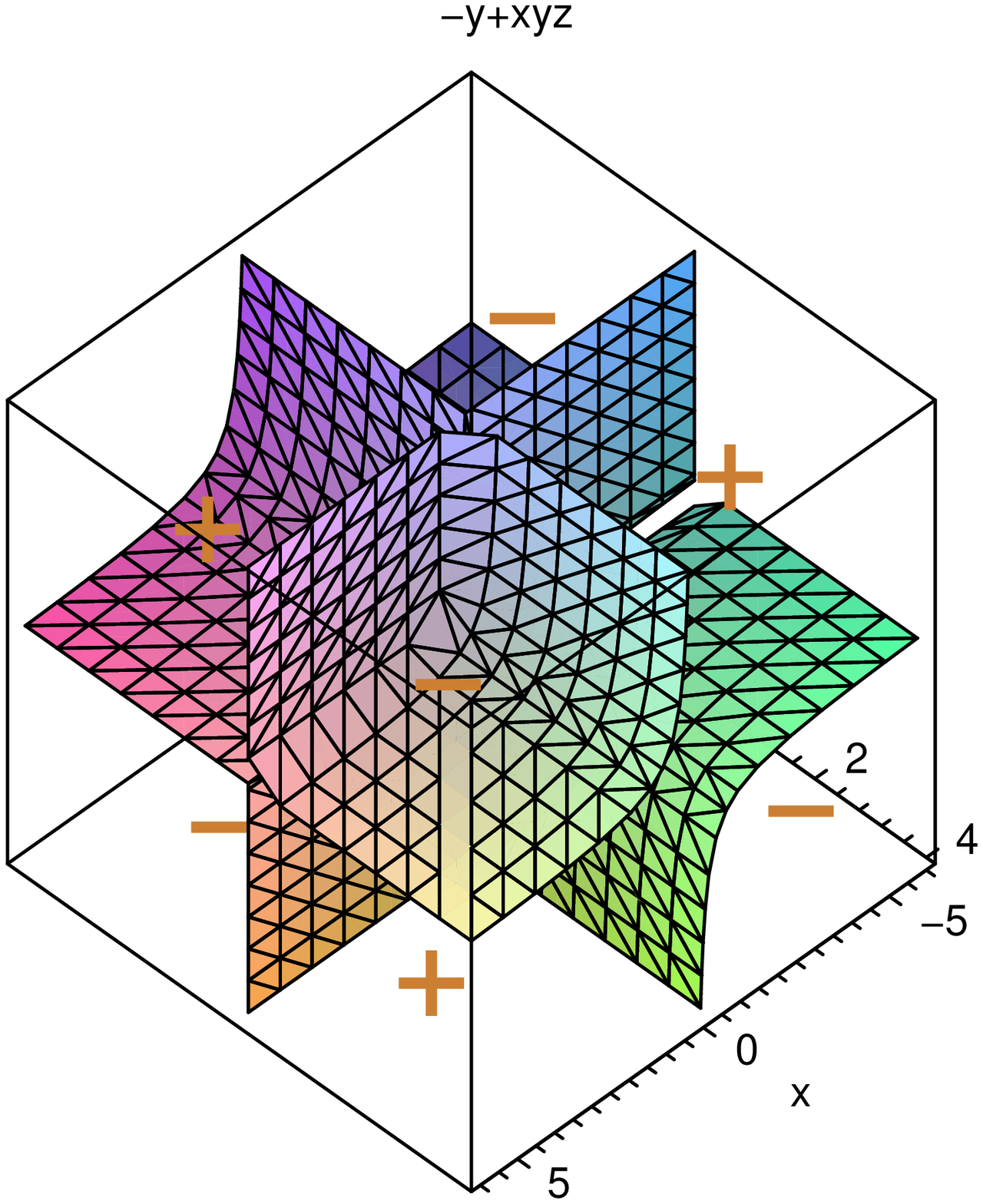} \qquad \qquad \includegraphics[width=45mm]{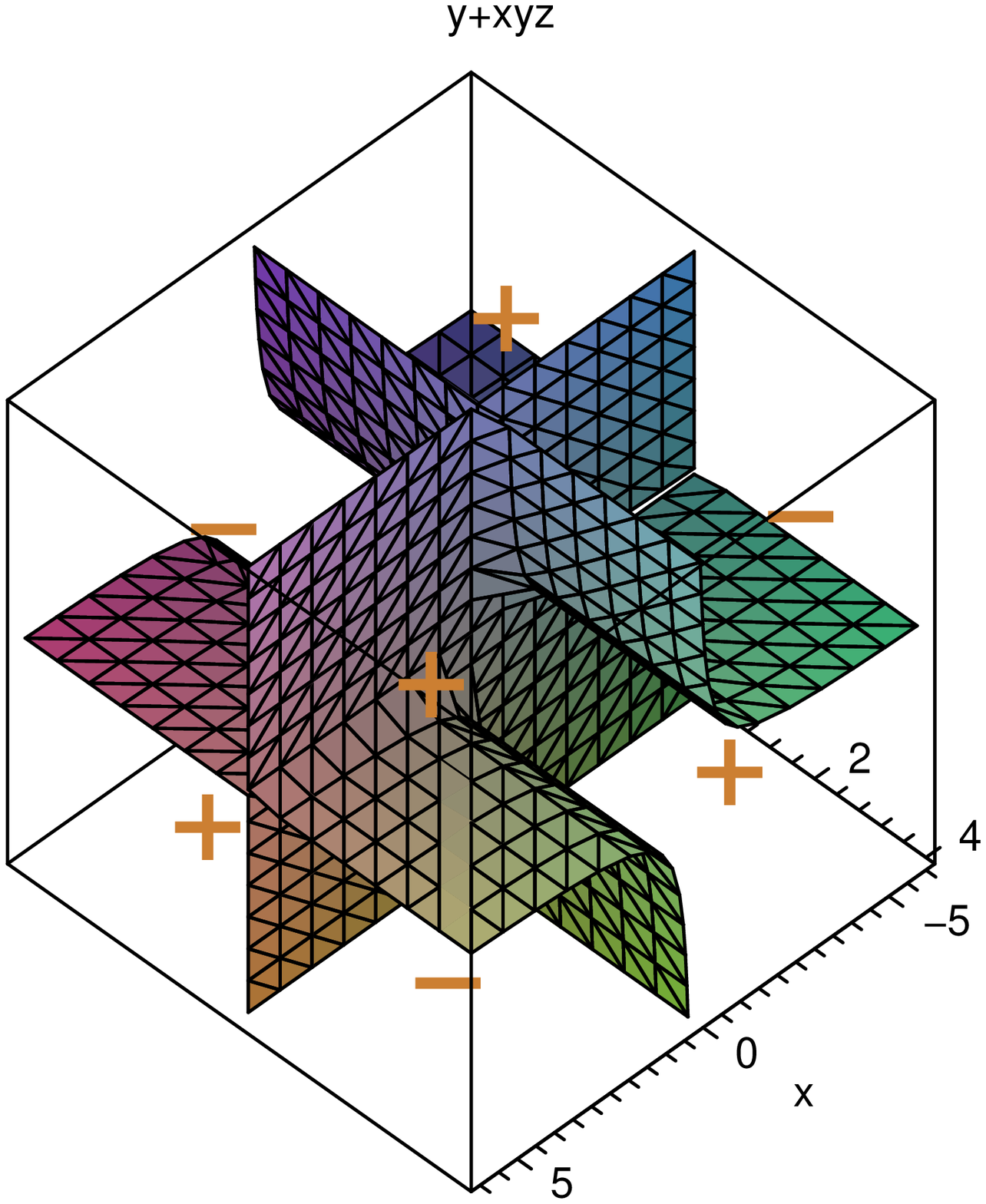}}

\subsection[Type III, case 5A$^*$, ...]{Type III, case 5A$\boldsymbol{^*}$, $\boldsymbol{\mathfrak{sl(2)}\ltimes\mathbb{C}^{s+1}}$}

For the last example, we consider the type III case 5A$^*$  quasi-exactly solvable
Lie algebra and we f\/ix the parameter $s$ to be one. This Lie algebra
$\g$ is therefore spanned by the following six f\/irst order
dif\/ferential operators
 \[ p,\quad q+r,xq+xr,\quad  xp,\quad yq+zr,\quad \textrm{and}\quad x^2p+xyq+xzr-nx , \]
and from the Table $4$, the $\g$-module of function is given by
 \[\nn=\{ x^iy^jz^k  | \ i+j+k\leq n, j\leq m_y, k\leq m_z \},\]
 where $n$, $m_y$ and $m_z$ are non-negative integers.
  A family of Schr\"odinger operators on $\mathbb{R}^3  \backslash  \{x=y\}$ is obtained from
 the following choice of coef\/f\/icients,
\begin{gather*}
 C_{ab}=\left(
   \begin{array}{cccccc}
     A & 0 & 0 & 0 & 0 & 0 \\
     0 & B & 0 & 0 & 0 & 0\\
     0 & 0 & C & 0 & 0 & 0 \\
     0 & 0 & 0 & 0 & 0 & 0\\
     0 & 0 & 0 & 0 & D & 0 \\
    0& 0 & 0 & 0 & 0 & 0
   \end{array}
 \right),
\\
 C_c=[
     0 , 0 , C , 0 , -2(1+m)D , 0 ], \qquad \textrm{and} \qquad C_0=(1+m)^2D,
\end{gather*}
where the parameters $A$, $B$, $C$, and $D$ are positive. The
induced contravariant metric is given by
\begin{equation}\label{metric5a}g^{(ij)}=\left(
  \begin{array}{ccc}
    Cx^2+A & 0 & 0 \\
    0 & Dy^2+B & Dyz+B \\
    0 & Dyz+B & Dz^2+B
  \end{array}
\right), \end{equation} its determinant is
$\widetilde{g}=BD(Cx^2+A)(y-z)^2$ and the metric is positive
def\/inite on $\mathbb{R}^3\backslash \{x=y\}$. Before performing the
gauge transformation the operator reads as
\begin{gather*}-2\hh =
\Delta+(2Cx-Cmx)p+(-Dy-2Dmy)q+(-Dz-2Dmz)r\\
\phantom{-2\hh =}{} -1/2Cm+1/4Cm^2+(1+m)^2D,\end{gather*}
 and one easily verif\/ies
that the operator respects the closure condition. The gauge factor
required to obtain a Schr\"odinger operator is
\[
\mu=(Cx^2+A)^{\frac{1-m}4}(y-z)^{\frac{-2m-3}{2}},
\] and once again,
contains the same factors as the determinant of the covariant
metric. Finally, after the gauge transformation, the Schr\"odinger
operator reads as,
\[
-2 \hh_0=\Delta+U,
\]
where $U$ depends on the three variables. Although, it is not known
if the functions in $\widetilde{\nn}$ are square integrable on the
domain
$\mathbb{R}^3 \backslash \{x=y\}$.

Note that for this example, the scalar curvature is constant and
depends on the parameter~$D$ while the Riemann curvature tensor is
equal to
 \[\frac{-1}{B(y-z)^2}dy dz dy
dz.\] However, the potential does not seem to be separable.

\subsection*{Acknowledgements}

The research is supported by a NSERC Grant $\#$RGPIN $105490-2004$
and a McGill Graduate Studies Fellowship. I would like to thank Niky
Kamran, for all the encouragement
  and the precious advice he gave me.

\pdfbookmark[1]{References}{ref}
\LastPageEnding

\end{document}